\newtheorem{theorem}{Theorem}[section]
\newtheorem{proposition}[theorem]{Proposition}
\newtheorem{lemma}[theorem]{Lemma}
\newtheorem{corollary}[theorem]{Corollary}
\theoremstyle{definition}
\newtheorem{definition}[theorem]{Definition}
\newtheorem{remark}[theorem]{Remark}
\numberwithin{equation}{section}
\numberwithin{figure}{section}
\newcommand\Fcal{\mathcal{F}}
\newcommand\Gcal{\mathcal{G}}
\newcommand\Oscr{\mathscr{O}}
\newcommand\B{\mathbb{B}}
\newcommand\C{\mathbb{C}}
\newcommand\N{\mathbb{N}}
\newcommand\ggot{\mathfrak{g}}
\newcommand\hgot{\mathfrak{h}}
\newcommand\igot{\mathfrak{i}}
\renewcommand\igot{\mathfrak{i}}
\renewcommand\imath{\igot}
\newcommand\dist{\mathrm{dist}}
\newcommand\Aut{\mathrm{Aut}}
\newcommand\Id{\mathrm{Id}}
\def\dist{\mathrm{dist}}
\begin{document}

\title{Complete nonsingular holomorphic foliations \\ on Stein manifolds}

\author{Antonio Alarc\'on \; and \; Franc Forstneri{\v c}}

\address{Antonio Alarc\'on, Departamento de Geometr\'{\i}a y Topolog\'{\i}a e Instituto de Matem\'aticas (IMAG), Universidad de Granada, Campus de Fuentenueva s/n, E--18071 Granada, Spain}
\email{alarcon@ugr.es}

\address{Franc Forstneri\v c, Faculty of Mathematics and Physics, University of Ljubljana, and Institute of Mathematics, Physics, and Mechanics, Jadranska 19, 1000 Ljubljana, Slovenia}
\email{franc.forstneric@fmf.uni-lj.si}

\subjclass[2010]{Primary 32M17, 32M25; secondary 32H02, 37F75}

\date{26 December 2023}


\keywords{Stein manifold; complete holomorphic foliation; density property}

\begin{abstract}
Let $X$ be a Stein manifold of complex dimension $n>1$ endowed with a 
Riemannian metric $\ggot$. We show that for every integer $k$ with 
$\left[\frac{n}{2}\right] \le k \le n-1$ there is a nonsingular holomorphic foliation 
of dimension $k$ on $X$ all of whose leaves are closed and $\ggot$-complete.
The same is true if $1\le k<\left[\frac{n}{2}\right]$ provided that there 
is a complex vector bundle epimorphism $TX\to X\times\C^{n-k}$.
We also show that if $\Fcal$ is a proper holomorphic foliation on $\C^n$
$(n>1)$ then for any Riemannian metric $\ggot$ on $\C^n$ there is a holomorphic
automorphism $\Phi$ of $\C^n$ such that the image foliation 
$\Phi_*\Fcal$ is $\ggot$-complete. The analogous result is obtained
on every Stein manifold with Varolin's density property.
\end{abstract}

\maketitle

\centerline{\em Dedicated to Josip Globevnik}
\bigskip

%
%
%
%
\section{Introduction}\label{sec:intro} 

Let $X$ be a complex manifold endowed with 
a Riemannian metric $\ggot$.
A locally closed subvariety $Y$ of $X$ is said to be $\ggot$-complete 
if every smooth divergent path $\gamma:[0,1)\to Y$ 
(i.e., one leaving every compact subset of $Y$) has infinite $\ggot$-length: 
$
	\int_0^1 \ggot(\gamma(t),\dot\gamma(t)) \,dt=+\infty.
$
If $\ggot$ is a complete metric on $X$ then clearly every closed connected complex 
subvariety of $X$ of positive dimension is $\ggot$-complete. 
A holomorphic foliation on $X$ is said to be $\ggot$-complete if every 
leaf is $\ggot$-complete. 
For the theory of holomorphic foliations, see Sc{\'a}rdua \cite{Scardua2021}.

The results of this paper concern the problem, posed by 
Yang in 1977 \cite{Yang1977JDG,Yang1977AMS}, whether there exist 
bounded complex submanifolds of a Euclidean space $\C^n$ for $n>1$
which are complete in the Euclidean metric.
We begin with a brief overview of the main developments 
on this subject, referring the interested reader to the more complete recent 
survey by Alarc\'on \cite{Alarcon2022Yang}. 
We also point out that Yang's problem is related to the Calabi--Yau problem 
in the theory of minimal surfaces; see \cite[Chapter 7]{AlarconForstnericLopez2021} 
for a recent survey of the latter subject.

The first affirmative result on Yang's problem was obtained 
by Jones \cite{Jones1979}, who constructed a holomorphic immersion of the unit 
disc into $\C^2$ and an embedding into $\C^3$ with bounded image 
and complete induced metric. His method is based on the BMO duality theorem. 
Much later, Mart\'{i}n, Umehara, and Yamada \cite{MartinUmeharaYamada2009PAMS} 
used a geometric method to construct complete 
bounded complex curves in $\C^2$ with arbitrary finite genus and finitely many ends.
This was followed by Alarc\'{o}n and L\'{o}pez \cite{AlarconLopez2013MA} who 
showed that any topological type is possible. 
The methods used in these constructions do not 
provide any control of the complex structure on the curve, except in the 
simply connected case when any such curve is biholomorphic to the disc. 
In \cite{AlarconForstneric2013MA}, the authors used the Riemann--Hilbert boundary 
value problem and gluing methods to prove that every bordered Riemann surface 
admits a complete proper holomorphic immersion into the ball of $\C^2$  
and a complete proper holomorphic embedding into the ball of $\C^3$. 
A related result for Riemann surfaces with finite genus and countably many ends
was obtained by the authors in \cite[Theorem 1.8]{AlarconForstneric2021RMI}. 
These are still the only results on Yang's problem with a complete control of the conformal structure on the underlying Riemann surface.

All results mentioned so far pertain to complex curves. 
This is how things stood until the seminal work of
Globevnik \cite{Globevnik2015AM} in 2015. In his landmark construction,
Globevnik showed that for every pair of integers $1\le k<n$ there exists 
a holomorphic foliation of the unit ball $\B^n$ in $\C^n$ 
by closed complete complex subvarieties of complex dimension $k$, 
most of which are smooth (without singularities). 
The leaves of foliations in his construction are connected 
components of the level sets of holomorphic maps  $f:\B^n \to \C^{n-k}$. 
Completeness of the leaves is ensured by choosing the map $f$ to grow 
sufficiently fast on components of a 
suitable labyrinth $\Gamma\subset\B^n$ having the property that any divergent path
in $\B^n$ avoiding all but finitely many components of $\Gamma$
has infinite Euclidean length (see Lemma \ref{lem:labyrinth}). 
The construction of such labyrinths was one of the main new results
of Globevnik's paper. In the sequel \cite{Globevnik2016MA}, Globevnik
extended his construction to any pseudoconvex domain in $\C^n$. 
Further improvements and generalizations of his results were made 
by Alarc\'on \cite{Alarcon2022JDG,Alarcon2022IUMJ}.

This is a suitable point to state our first main result. It generalizes Globevnik's theorem
to an arbitrary Riemannian Stein manifold, and the foliations that we find are nonsingular.

%
%
\begin{theorem}\label{th:main}
Let $X$ be a Stein manifold of complex dimension $n>1$ endowed with a 
Riemannian metric $\ggot$. For every integer $k$ with 
$\left[\frac{n}{2}\right] \le k \le n-1$ there exists a nonsingular holomorphic foliation 
of dimension $k$ on $X$ all of whose leaves are closed and $\ggot$-complete.
The same holds if $1\le k<\left[\frac{n}{2}\right]$ provided that there 
is a complex vector bundle epimorphism $TX\to X\times\C^{n-k}$. 
In particular, if $X$ is parallelizable then it admits a nonsingular 
holomorphic foliation of any dimension $k\in\{1,\ldots,n-1\}$ with 
closed $\ggot$-complete leaves.
\end{theorem}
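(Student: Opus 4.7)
Following Globevnik, I would realize the foliation as the level-set foliation of a holomorphic submersion $f : X \to \C^{n-k}$. Because $f$ is a submersion, the connected components of its fibers are automatically closed complex submanifolds of dimension exactly $k$, so the resulting foliation is nonsingular with closed leaves. Completeness will then be forced by building a labyrinth $\Gamma = \bigsqcup_{j \ge 1} \Gamma_j \subset X$ — a locally finite family of pairwise disjoint, $\Ocal(X)$-convex compacta such that every smooth divergent path meeting only finitely many $\Gamma_j$ has infinite $\ggot$-length — and by arranging that $|f|$ is large on each $\Gamma_j$. Any level set $L$ of $f$ can then meet at most finitely many $\Gamma_j$, so every divergent path in $L$ must have infinite $\ggot$-length.

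First, I would produce an initial holomorphic submersion $f_0 : X \to \C^{n-k}$. By Forstneri\v{c}'s Oka principle for holomorphic submersions from Stein manifolds to Euclidean spaces, such an $f_0$ exists as soon as $TX$ admits a continuous complex vector bundle epimorphism onto $X \times \C^{n-k}$, and the underlying epimorphism may be prescribed. For $k \ge [n/2]$ this topological condition is automatic by obstruction theory: a Stein $n$-manifold has the homotopy type of a CW complex of real dimension at most $n$, and the iterated obstructions to trivializing a rank-$(n-k)$ quotient of $TX$ (equivalently, to finding $n-k$ pointwise linearly independent sections of $T^*X$) vanish exactly in the range $n-k \le n-[n/2]$. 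For $k < [n/2]$ the required epimorphism is part of the hypothesis, and for parallelizable $X$ it is trivial.

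Second, I would construct the labyrinth. This is the Riemannian analogue, for an arbitrary Stein $(X, \ggot)$, of Globevnik's Euclidean labyrinths in the ball and Alarc\'on's labyrinths in pseudoconvex domains. The standard construction exhausts $X$ by strongly pseudoconvex sublevel sets of a strictly plurisubharmonic exhaustion function and fills the shells between consecutive levels with many small holomorphically convex pieces, placed densely enough in the $\ggot$-metric to close off every short escape route through the shell.

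Third, I would modify $f_0$ by induction. At the $j$-th stage, apply Forstneri\v{c}'s homotopy Runge-type approximation theorem for holomorphic submersions into $\C^{n-k}$ to perturb $f_{j-1}$ to a global holomorphic submersion $f_j$ on $X$ that (i) agrees with $f_{j-1}$ to within $\epsilon_j$ on an exhausting holomorphically convex compactum $K_j$, and (ii) satisfies $|f_j| > M_j$ on $\Gamma_j$. With $\sum \epsilon_j < \infty$ and $M_j \uparrow \infty$ growing rapidly enough, $f_j$ converges locally uniformly to a holomorphic submersion $f : X \to \C^{n-k}$ whose level-set foliation has all the required properties. I expect this third step to be the main technical obstacle: at each stage one must simultaneously enlarge $|f|$ on the new labyrinth component, preserve both the large values already achieved on previous components and the fixed behavior on $K_j$, and — crucially — keep $f_j$ a \emph{global} submersion, not merely a submersion on $K_j$. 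The tool that makes this possible is precisely the Oka principle for holomorphic submersions, applied in synchrony with the labyrinth geometry built in step two.
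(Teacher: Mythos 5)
Your plan follows the same skeleton as the paper's proof: realize the foliation as the level sets of a holomorphic submersion $f:X\to\C^{n-k}$, obtain such $f$ from Forstneri\v{c}'s Oka principle for noncritical maps, and force completeness by building $f$ inductively via the Runge-type approximation theorem so that $|f|$ blows up on a labyrinth. The inductive step you worry about is in fact handled cleanly by \cite[Theorems 2.1 and 2.5]{Forstneric2003AM}, which give approximation of noncritical maps on $\Oscr(X)$-convex compacta together with extension to larger such compacta; the paper uses exactly that.

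Where the two arguments diverge is in the labyrinth construction, and here your sketch is the weaker point. You propose to build the labyrinth \emph{intrinsically} in $(X,\ggot)$ by filling psh shells with small holomorphically convex pieces ``placed densely enough''; this is plausible but nontrivial, and the paper only carries out such an intrinsic construction later (Lemma~\ref{lem:labyrinth2}), where it is genuinely needed because the pieces must moreover be holomorphically contractible. For Theorem~\ref{th:main} the paper avoids this entirely: it embeds $X$ as a closed submanifold of $\C^N$, takes Globevnik-type Euclidean labyrinths $\Gamma_i$ in the shells $\{i<|z|<i+1\}$ with $r\overline\B^N\cup\Gamma_i$ polynomially convex, and sets $\Gamma'_i=\Gamma_i\cap X$. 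Then $X_i\cup\Gamma'_i$ is automatically $\Oscr(X)$-convex, and the Euclidean length lower bound for paths avoiding $\Gamma_i$ translates into a $\ggot$-length bound via constants $c_i$ comparing $\ggot$ to the Euclidean metric on the compact sets $X_{i+1}$, with the labyrinths chosen so that $\sum_i c_iM_i=+\infty$. This extrinsic route sidesteps the issue of whether one can make your intrinsic construction $\Oscr(X)$-convex with the right quantitative estimates. One more small point: you do not need to run obstruction theory yourself to justify existence of $f_0$ for $k\ge[n/2]$; \cite[Theorem~2.5]{Forstneric2003AM} already states that a holomorphic submersion $X\to\C^q$ exists unconditionally whenever $q\le\left[\frac{n+1}{2}\right]$, which is exactly the range $k\ge[n/2]$.
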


Foliations in Theorem \ref{th:main} are given by holomorphic submersions 
$X\to \C^{n-k}$. The proof (see Section \ref{sec:proof1}) combines the methods 
of Globevnik \cite{Globevnik2015AM,Globevnik2016MA} with those of Forstneri\v c 
\cite{Forstneric2003AM}; the latter provide the h-principle 
for holomorphic submersions from any Stein manifold $X$
to Euclidean spaces of dimension $<\dim X$. 

%
%
The construction methods used in the proof of Theorem \ref{th:main}
(like those in 
\cite{Globevnik2015AM,Globevnik2016MA,Alarcon2022JDG,Alarcon2022IUMJ}) 
do not provide any control of the topology or the complex structure of the leaves. 
Soon after Globevnik's work, Alarc\'on, Globevnik, and L\'opez 
\cite{AlarconGlobevnik2017,AlarconGlobevnikLopez2019Crelle}
combined the use of labyrinths with the approximation theory 
for holomorphic automorphisms of 
complex Euclidean spaces to construct complete properly embedded complex 
hypersurfaces in the ball $\B^n$ of $\C^n$ $(n>1)$ with partial control on their 
topology, and with prescribed topology if $n=2$. The main idea 
is to inductively deform a given closed complex submanifold $Y\subset\C^n$ 
by holomorphic automorphisms of $\C^n$ such that the images avoid more
and more pieces of a given labyrinth $\Gamma\subset\B^n$, and they converge 
to a properly embedded complete complex submanifold of $\B^n$ which is 
biholomorphic to a Runge domain in $Y$. This construction is possible if 
$\Gamma$ is polynomially convex and the connected components of $\Gamma$ 
are holomorphically contractible. The labyrinths used in the aforementioned papers 
consist of balls in suitably placed affine real hyperplanes in $\C^n$. 
A bit later, the authors proved in  \cite{AlarconForstneric2020MZ} that the ball $\B^n$  
for $n>1$ admits a nonsingular holomorphic foliation by complete holomorphic discs. 
By using labyrinths in pseudoconvex shells in $\C^n$, constructed 
by Charpentier and Kosi\'{n}ski in \cite{CharpentierKosinski2020}, one obtains 
the analogous result with the ball replaced by an arbitrary Kobayashi hyperbolic 
pseudoconvex Runge domain in $\C^n$ with $n>1$ 
(see \cite[Remark 1]{AlarconForstneric2020MZ}). If the domain fails to be
hyperbolic then some leaves of the foliation may be complex lines. 
See the survey \cite{Alarcon2022Yang} for more information.

In order to extend this technique to more general Stein manifolds, we must
assume that the manifold has many holomorphic automorphisms.
The suitable class are Stein manifolds with the density property;
see Definition \ref{def:density}. Another technical issue is to 
find suitable labyrinths in $X$ with holomorphically contractible components. 
We do this in Section \ref{sec:proper} in the course of 
proof of Theorem \ref{th:proper2} (see Lemma \ref{lem:labyrinth2},
which is an important new tool). 

We shall consider foliations satisfying the following condition.

%
%
%
\begin{definition}\label{def:proper}
Let $X$ be a connected Stein manifold of dimension $>1$. 
A (possibly singular) holomorphic foliation $\Fcal$ on $X$ 
is {\em proper} if every leaf $\Fcal_x$ $(x\in X)$ is closed and satisfies 
$\dim \Fcal_x\ge 1$, and for every compact subset $K\subset X$ the set 
\begin{equation}\label{eq:FK}
	\Fcal(K) := \overline{\bigcup_{x \in K}\Fcal_x} 
\end{equation} 
is such that $X\setminus \Fcal(K)$ is nonempty and not relatively compact in $X$.
\end{definition}

A biholomorphic map clearly takes a proper foliation to another such foliation. 

\begin{proposition}\label{prop:proper}
Every holomorphic foliation $\Fcal$ on a connected
Stein manifold $X$ given by a nonconstant holomorphic map 
$f=(f_1,\ldots,f_q):X\to\C^q$ with $1\le q<\dim X$ is proper. 
\end{proposition}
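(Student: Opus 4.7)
The plan is to verify both conditions of Definition \ref{def:proper} directly, using essentially only that each leaf $\Fcal_x$ is contained in the fiber $f^{-1}(f(x))$ together with the maximum modulus principle.

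First I would check that every leaf is closed of positive dimension. Each $\Fcal_x$ is a component of the closed complex analytic subvariety $f^{-1}(f(x))\subset X$, hence closed in $X$; and the standard fiber dimension inequality for holomorphic maps gives $\dim_x f^{-1}(f(x)) \geq \dim X - q \geq 1$, so $\dim \Fcal_x \geq 1$. This handles the first requirement in the definition.

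For the second condition, fix a compact set $K\subset X$. Since $f^{-1}(f(K))$ is closed and contains every leaf through a point of $K$, the inclusion $\bigcup_{x\in K}\Fcal_x \subseteq f^{-1}(f(K))$ upgrades to $\Fcal(K)\subseteq f^{-1}(f(K))$, so it suffices to show that the open set $U:= f^{-1}(\C^q\setminus f(K))$ is nonempty and not relatively compact in $X$. The key observation is that $f(X)$ is unbounded in $\C^q$: because $X$ is a positive-dimensional connected Stein manifold it is noncompact, and since some component $f_j$ of $f$ is nonconstant, the maximum modulus principle forces $|f_j|$, and hence $|f|$, to be unbounded on $X$. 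As $f(K)$ is bounded, one can pick a sequence $x_m\in X$ with $|f(x_m)|\to\infty$; then $x_m\in U$ for all large $m$, and the sequence has no convergent subsequence (continuity of $f$ would otherwise bound $|f(x_m)|$ along one), so $\{x_m\}$ witnesses both the nonemptiness of $X\setminus\Fcal(K)$ and its failure to be relatively compact.

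The argument is essentially a bookkeeping verification with the inclusion $\Fcal(K)\subseteq f^{-1}(f(K))$; the only conceptual input is the maximum-modulus observation that $f(X)$ cannot be bounded, and the only analytic input is the fiber dimension inequality. I do not foresee a real obstacle.
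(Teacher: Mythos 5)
Your opening steps are fine: each leaf is an irreducible component of the closed analytic subvariety $f^{-1}(f(x))$, hence closed in $X$; the hypothesis $1\le q<\dim X$ together with the fibre--dimension inequality gives $\dim\Fcal_x\ge 1$; and the reduction to showing that $U=f^{-1}(\C^q\setminus f(K))$ is nonempty and not relatively compact is correct, since $\Fcal(K)\subset f^{-1}(f(K))$ and the latter is closed. The gap is in the key step. The maximum modulus principle does \emph{not} force a nonconstant holomorphic function on a noncompact connected Stein manifold to be unbounded; it only rules out interior maxima of the modulus. A concrete counterexample to your claim: take $X=\Delta\times\Delta$ (the bidisc, a Stein manifold of dimension $2$), $q=1$, and $f(z,w)=z$. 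This is a nonconstant holomorphic submersion with $f(X)=\Delta$ bounded, so no sequence $x_m$ with $|f(x_m)|\to\infty$ exists, and your argument has nothing to work with precisely where it needs to exhibit points outside $f^{-1}(f(K))$.

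The paper's route avoids this by using openness rather than unboundedness: some nonconstant component $f_j:X\to\C$ is an open map (nonconstant holomorphic on a connected manifold), so $f_j(X)$ is an open subset of $\C$ that properly contains the compact set $f_j(K)$, and any nonempty open $V\subset f_j(X)\setminus f_j(K)$ yields a nonempty open set $f_j^{-1}(V)\subset X\setminus f_j^{-1}(f_j(K))\subset X\setminus\Fcal(K)$. (Note that one must pass to a single scalar component $f_j$: the full map $f$ need not be open, e.g.\ $f=(g,g^2)$ has image contained in a curve.) To see that $f_j^{-1}(V)$ is not relatively compact one uses the same fibre--dimension input you already invoked: for $c\in V$ the fibre $f_j^{-1}(c)$ is a nonempty closed complex subvariety of $X$ of dimension $\ge\dim X-1\ge 1$, hence noncompact. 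Replacing your unboundedness claim with the openness of $f_j$ repairs the argument; the remaining structure of your proof is sound.
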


\begin{proof}
Every irreducible component of a nonempty fibre $f^{-1}(z)$ $(z\in \C^q)$ is a 
closed complex subvariety of $X$ of some dimension $d\in \{1,\ldots,\dim X-1\}$. 
In particular, the foliation does not have any zero dimensional leaves.
Given a compact set $K\subset X$ we clearly have that 
\begin{equation}\label{eq:inclusion}
	\Fcal(K) \subset f^{-1}(f(K))\subset f_j^{-1}(f_j(K))\ \ 
	\text{for every $j=1,\ldots,q$.}
\end{equation}
Since $f$ is nonconstant, the function $f_j:X\to \C$ is nonconstant for some $j$, 
hence an open map. Thus, $f_j(X)$ is an open subset of $\C$ 
containing the compact subset $f_j(K)$. Choose a nonempty open subset 
$U\subset f_j(X)\setminus f_j(K)$. The set
$X\setminus f_j^{-1}(f_j(K))$ then contains $f_j^{-1}(U)$, hence 
is nonempty and not relatively compact.
By \eqref{eq:inclusion} the same is true for $X\setminus \Fcal(K)$.
\end{proof} 

The proof of Proposition \ref{prop:proper} also gives the following criterium for
properness.

\begin{corollary}\label{cor:proper}
Assume that $X$ is a connected Stein manifold of dimension $>1$ and
$\Fcal$ is a holomorphic foliation on $X$ with all leaves closed and of positive
dimension. If there is a nonconstant holomorphic function on $X$ which is
constant on every leaf of $\Fcal$, then $\Fcal$ is a proper foliation.
\end{corollary}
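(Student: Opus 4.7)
The plan is to reuse the chain of inclusions and the open-mapping argument already used in the proof of Proposition \ref{prop:proper}, with the given nonconstant function playing the role that a single coordinate $f_j$ played there. Let $h\colon X\to\C$ denote a nonconstant holomorphic function that is constant on each leaf of $\Fcal$. The hypothesis immediately yields $\Fcal_x\subset h^{-1}(h(x))$ for every $x\in X$, so for every compact set $K\subset X$,
\[
	\bigcup_{x\in K}\Fcal_x \;\subset\; h^{-1}(h(K)),
\]
and the right-hand side is closed because $h(K)$ is compact; hence $\Fcal(K)\subset h^{-1}(h(K))$.

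Next I would invoke the open mapping theorem on the connected complex manifold $X$: since $h$ is nonconstant, $h(X)$ is an open, nonempty subset of $\C$. The compact set $h(K)$ must be a proper subset of $h(X)$, because a nonempty open subset of $\C$ cannot be compact. Thus $h(X)\setminus h(K)$ contains a nonempty open set $U$, and $h^{-1}(U)$ is a nonempty open subset of $X\setminus h^{-1}(h(K))\subset X\setminus\Fcal(K)$, which takes care of the nonemptiness part of Definition \ref{def:proper}.

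To finish, I would verify that $X\setminus\Fcal(K)$ is not relatively compact by the contradiction used implicitly in Proposition \ref{prop:proper}: were its closure compact, $X$ would be the union of the closed set $h^{-1}(h(K))$ and a compact set, so $h(X)$ would be a compact subset of $\C$; being simultaneously open, nonempty, and compact in the connected space $\C$, it would have to equal $\C$, contradicting compactness. Closedness and positive dimension of the leaves are in the hypothesis, so this completes the verification of properness. The argument is essentially a specialization of the one in Proposition \ref{prop:proper}, and I anticipate no serious obstacle; the one point requiring mild care is that, unlike a coordinate function $f_j$, an arbitrary $h$ constant on leaves need not be surjective onto $\C$, which is precisely why one needs the open mapping theorem to produce the set $U$ inside $h(X)\setminus h(K)$.
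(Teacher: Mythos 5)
Your proof is correct and follows the paper's intended route: the paper proves this corollary by remarking that the proof of Proposition~\ref{prop:proper} applies verbatim with $h$ in place of the coordinate $f_j$, which is exactly what you carry out. Your closing contradiction for the failure of relative compactness (that $h(X)$ would be a nonempty, open, compact subset of the connected space $\C$) is a clean, self-contained way to settle a point the paper leaves implicit; and note that, contrary to your parenthetical worry, $f_j$ in Proposition~\ref{prop:proper} is likewise not assumed surjective, so the open mapping theorem is already doing the same work there.
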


An example of a nonproper (singular) holomorphic foliation is given 
by punctured complex lines through the origin in $\C^2$, with the origin a
leaf of dimension zero. Taking a Cartesian product with $\C$ gives a 
nonproper holomorphic foliation of $\C^3$ with a closed leaf of dimension
$1$ and nonclosed leaves of dimension $2$. 
Proposition \ref{prop:proper} and Corollary \ref{cor:proper} show that  there are no 
simple examples of nonproper holomorphic foliations having closed leaves.

Our second main result is the following. It is proved in Section \ref{sec:proper}.

%
%
\begin{theorem}\label{th:proper1}
Let $\ggot$ be a Riemannian metric on $\C^n$, $n>1$.
For every proper holomorphic foliation $\Fcal$ on $\C^n$ (see Definition \ref{def:proper})
there is a holomorphic automorphism $\Phi\in\Aut(\C^n)$ 
such that the image foliation $\Phi_*\Fcal$ with leaves $\Phi(\Fcal_z)$ $(z\in \C^n)$
is $\ggot$-complete. 
\end{theorem}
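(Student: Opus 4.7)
The proof I propose combines Globevnik's labyrinth technique with Andersén-Lempert theory for $\Aut(\C^n)$, in the spirit of \cite{AlarconGlobevnik2017,AlarconGlobevnikLopez2019Crelle}. The plan proceeds in three stages.

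First, I construct a labyrinth $\Gamma = \bigsqcup_{j \ge 1} L_j \subset \C^n$ adapted to the metric $\ggot$: the $L_j$ are pairwise disjoint, compact, polynomially convex, and holomorphically contractible, distributed in the shells of a polynomially convex exhaustion $\{K_j\}_{j \ge 0}$ of $\C^n$ with $K_0 = \emptyset$, so that every smooth divergent path in $\C^n$ meeting only finitely many of the $L_j$ has infinite $\ggot$-length. This is a $\ggot$-adapted refinement of the construction in \cite{Globevnik2015AM,Globevnik2016MA}, and is the labyrinth lemma anticipated in Section \ref{sec:proper}. Holomorphic contractibility of each $L_j$ is crucial so that Andersén-Lempert isotopies are available.

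Second, I build $\Phi$ as a locally uniform limit of $\Phi_j := \psi_j \circ \cdots \circ \psi_1 \in \Aut(\C^n)$. At stage $j$, put $Y_{j-1} := \Phi_{j-1}(\Fcal(K_{j-1}))$. Since $\Fcal$ is proper (Definition \ref{def:proper}) and $\Phi_{j-1}$ is an automorphism, the set $\C^n \setminus Y_{j-1}$ is nonempty and not relatively compact, so I can select a point $p_j \in \C^n \setminus Y_{j-1}$ situated far from $K_{j-1} \cup L_1 \cup \cdots \cup L_{j-1}$. Because $L_j$ is holomorphically contractible, there is a holomorphic isotopy on a polynomially convex neighborhood of $L_j$ carrying $L_j$ into a small ball around $p_j$ disjoint from $Y_{j-1}$; the identity isotopy is used on the disjoint polynomially convex set $K_{j-1} \cup L_1 \cup \cdots \cup L_{j-1}$. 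By the Andersén-Lempert theorem (density property of $\C^n$), the time-one map of this combined isotopy is approximated by $\phi_j \in \Aut(\C^n)$ that is arbitrarily close to the identity on $K_{j-1} \cup L_1 \cup \cdots \cup L_{j-1}$ and satisfies $\phi_j(L_j) \cap Y_{j-1} = \emptyset$. Setting $\psi_j := \phi_j^{-1}$ and $\Phi_j := \psi_j \circ \Phi_{j-1}$ yields the key disjointness
\[
\Phi_j(\Fcal(K_{j-1})) \cap L_j \;=\; Y_{j-1} \cap \phi_j(L_j) \;=\; \emptyset.
\]

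Third, by choosing $\phi_j$ close enough to the identity at each step, I would simultaneously force joint convergence of the sequences $\Phi_j$ and $\Phi_j^{-1}$ (so the limit $\Phi := \lim \Phi_j$ lies in $\Aut(\C^n)$ via the standard Andersén-Lempert device) and preservation in the limit of every earlier disjointness: $\Phi(\Fcal(K_{i-1})) \cap L_i = \emptyset$ for all $i \ge 1$. Granting this, for any leaf $\Fcal_x$ pick $j_0$ with $x \in K_{j_0 - 1}$; then $\Phi(\Fcal_x) \cap L_j = \emptyset$ for all $j \ge j_0$. Every smooth divergent path in the closed subvariety $\Phi(\Fcal_x) \subset \C^n$ is divergent in $\C^n$ and meets only finitely many components of $\Gamma$, so by the labyrinth property it has infinite $\ggot$-length; hence each leaf of $\Phi_* \Fcal$ is $\ggot$-complete.

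The principal obstacles I anticipate are twofold: (a) producing the labyrinth with the simultaneous requirements of the $\ggot$-length condition and polynomially convex, holomorphically contractible components, so that Andersén-Lempert applies at each step; and (b) the delicate bookkeeping of the inductive approximation, since at step $j$ one must preserve $\Phi_k(\Fcal(K_{i-1})) \cap L_i = \emptyset$ for every $i < j$ and $k \ge i$, forcing each $\psi_k$ to be close to the identity on a growing family of compact sets including neighborhoods of all previous $L_i$. The proper foliation hypothesis enters precisely by supplying, at each stage, the target point $p_j \in \C^n \setminus Y_{j-1}$ outside any prescribed compact set.
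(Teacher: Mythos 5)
Your overall strategy matches the paper's: iteratively twist the foliation by Anders\'en--Lempert automorphisms so that the image of $\Fcal(K_{j-1})$ avoids one more labyrinth component at each stage, pass to the limit, and use the labyrinth length-increasing property to conclude completeness. The paper's Lemma~\ref{lem:avoiding} plays exactly the role of your ``squeeze each holomorphically contractible piece into a small ball outside $Y_{j-1}$, then approximate by an automorphism and take the inverse'' step, and the proper-foliation hypothesis enters exactly where you say it does.

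There is, however, a genuine gap in the way you set up the convergence to $\Phi\in\Aut(\C^n)$, and it comes from fixing the exhaustion $\{K_j\}$ and the labyrinth $\Gamma=\bigsqcup L_j$ in advance. You require $\phi_j$ to be $\epsilon_j$-close to the identity only on the \emph{fixed} compact $K_{j-1}\cup L_1\cup\cdots\cup L_{j-1}$. With $\epsilon_j$ chosen inductively small, this is enough to make $\Phi_j^{-1}=\phi_1\circ\cdots\circ\phi_j$ converge locally uniformly to an injective holomorphic map $\Psi:\C^n\to\C^n$, but there is nothing to force $\Psi$ to be surjective, nor to force $\Phi_j=\psi_j\circ\cdots\circ\psi_1$ to converge on compacts of all of $\C^n$: to control $\Phi_j(z)=\psi_j(\Phi_{j-1}(z))$ on a fixed compact $K$ one would need $\psi_j$ close to the identity on the \emph{moving} compact $\Phi_{j-1}(K)$, which is not contained in $K_{j-1}$ in general. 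The outcome could be a biholomorphism from a proper Runge subdomain $\Omega\subsetneq\C^n$ onto $\C^n$, which would only give the weaker conclusion of Theorem~\ref{th:proper3} (leaves biholomorphic to Runge subdomains of the original leaves), not $\Phi\in\Aut(\C^n)$ with $\Phi_*\Fcal$ a foliation of $\C^n$ whose leaves are the $\Phi(\Fcal_z)$.

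The paper avoids this by \emph{interleaving} the choice of shells and labyrinths with the automorphisms: after $\Phi_j$ is determined, the next radius $r_{j+1}$ is chosen so that $\Phi_j(j\,\overline\B)\cup B'_j\subset (r_{j+1}-1)\B$ (condition (iv$_j$) in the proof), the labyrinth $\Gamma_{j+1}$ is then placed in the shell above $B_{j+1}=r_{j+1}\overline\B$, and $\phi_{j+1}$ is required to be close to the identity on all of $B_{j+1}$. Because $B_{j+1}$ contains $\Phi_j(j\,\overline\B)$, this simultaneously (a) controls $\phi_{j+1}$ on the moving images of fixed compacts, giving convergence of $\Phi_j$, and (b) guarantees $j\,\overline\B\subset\Phi_j^{-1}(B_{j+1})$, so $\Omega=\bigcup_j\Phi_j^{-1}(B_j)=\C^n$ and the limit lies in $\Aut(\C^n)$. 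If you replace your fixed exhaustion by this inductive choice of shells, your argument closes; without it, the ``joint convergence'' you invoke is asserted rather than arranged.
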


This theorem is nontrivial if the metric $\ggot$ is not complete on $\C^n$,
and its main interest is when $\ggot$ decays fast at infinity.
Note that the foliation $\Phi_*\Fcal$ has exactly the same leaves as the original foliation
$\Fcal$ up to biholomorphisms, but they are now sufficiently twisted in $\C^n$
to become $\ggot$-complete.

By using labyrinths provided by Lemma \ref{lem:labyrinth2}, 
we shall also prove the analogue of Theorem \ref{th:proper1} for every Stein manifold 
with the density property, a notion introduced by Varolin \cite{Varolin2001}.

Recall that a holomorphic vector field on a complex manifold $X$ is called complete 
if its flow exists for all complex values of time, so it forms a complex
one-parameter group of holomorphic automorphisms of $X$.

%
%
\begin{definition}[See Varolin \cite{Varolin2001} 
or Definition 4.10.1 in \cite{Forstneric2017E}] \label{def:density}
A complex manifold $X$ has the {\em density property} if every holomorphic 
vector field on $X$ can be approximated uniformly on compacts by 
sums and commutators of complete holomorphic vector fields on $X$.
\end{definition}

The fact that the Euclidean space $\C^n$ for $n>1$ has the density property 
was discovered by Anders\'en and Lempert \cite{AndersenLempert1992}, 
thereby giving birth to this theory. It is known that most complex Lie groups 
and complex homogeneous manifolds have the density property.
Surveys of this subject can be found in \cite[Chapter 4]{Forstneric2017E}, 
\cite{ForstnericKutzschebauch2022}, and  \cite{Kutzschebauch2020}. 
An important point is that, on any Stein manifold $X$ with the density property,  
one can approximate isotopies of biholomorphic maps between 
Stein Runge domains in $X$ by isotopies of holomorphic automorphisms of $X$; 
see Forstneri\v c and Rosay \cite{ForstnericRosay1993} for the case $X=\C^n$
and \cite[Theorem 4.10.5]{Forstneric2017E} for the general case. 
This result plays an essential role in the proofs of Theorems 
\ref{th:proper1}, \ref{th:proper2}, and \ref{th:proper3}.

The following result generalizes Theorem \ref{th:proper1}; 
it is proved in Section \ref{sec:proper}. 
As above, it is nontrivial only if the given metric $\ggot$ on $X$ is not complete.

%
%
\begin{theorem}\label{th:proper2}
Let $X$ be a Stein manifold
%
with the density property, endowed with a 
Riemannian metric $\ggot$. For every proper holomorphic foliation $\Fcal$ on $X$
there is a holomorphic automorphism $\Phi\in\Aut(X)$ such that the image foliation 
$\Phi_*\Fcal$ is $\ggot$-complete. 
\end{theorem}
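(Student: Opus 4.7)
The plan is to adapt the proof of Theorem \ref{th:proper1} (the $\C^n$ case) to this setting, replacing the specific tools available on $\C^n$ by their analogues on any Stein manifold $X$ with the density property. The two key ingredients, both alluded to in the excerpt, are a labyrinth $\Gamma\subset X$ provided by Lemma \ref{lem:labyrinth2}, whose components are holomorphically contractible and have the ``long divergent path'' property for $\ggot$, and the Andersen--Lempert type approximation theorem \cite[Theorem 4.10.5]{Forstneric2017E}, which on $X$ allows isotopies of biholomorphisms between Stein Runge domains to be approximated by elements of $\Aut(X)$.

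First, I would fix an exhaustion $K_1\Subset K_2\Subset\cdots$ of $X$ by compact $\Oscr(X)$-convex sets and apply Lemma \ref{lem:labyrinth2} to produce a labyrinth $\Gamma = \bigsqcup_{n\ge 1} C_n$ with $C_n\subset K_{n+1}\setminus K_n$, each $C_n$ compact, $\Oscr(X)$-convex, and holomorphically contractible, arranged so that every divergent smooth path in $X$ meeting only finitely many $C_n$ has infinite $\ggot$-length. I would then construct inductively a sequence $\Phi_0=\Id,\Phi_1,\Phi_2,\ldots\in\Aut(X)$ by setting $\Phi_m = \Psi_m\circ\Phi_{m-1}$, requiring, for a suitable summable sequence $\varepsilon_m\searrow 0$, the two conditions \emph{(a)} $\Psi_m$ is $\varepsilon_m$-close to the identity on a compact neighborhood of $\Phi_{m-1}(K_m)\cup C_1\cup\cdots\cup C_{m-1}$, and \emph{(b)} every leaf of $(\Phi_m)_*\Fcal$ that meets $K_m$ is disjoint from $C_1\cup\cdots\cup C_m$.

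For the inductive step, the image foliation $(\Phi_m)_*\Fcal$ is again proper (properness is manifestly preserved by biholomorphisms), so Definition \ref{def:proper} ensures that the closed set $A_m := \bigcup\{L : L \text{ is a leaf of }(\Phi_m)_*\Fcal,\; L\cap K_{m+1}\neq\emptyset\}$ has nonempty, not relatively compact, complement in $X$. Combining this ``room'' with the holomorphic contractibility of $C_{m+1}$, one would build a smooth isotopy of biholomorphisms between Runge Stein domains in $X$ that is the identity near $\Phi_m(K_m)\cup C_1\cup\cdots\cup C_m$ and at time $t=1$ moves $C_{m+1}$ into $X\setminus(A_m\cup K_{m+1}\cup C_1\cup\cdots\cup C_m)$; the density property then furnishes, via \cite[Theorem 4.10.5]{Forstneric2017E}, an automorphism $\Psi_{m+1}\in\Aut(X)$ approximating this isotopy tightly enough that \emph{(a)} holds at step $m+1$, and, using the positivity of $\dist(A_m,C_n)$ for $n\le m$ to absorb the perturbation of the already placed leaves and labyrinth pieces, also \emph{(b)} at step $m+1$.

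With $\varepsilon_m$ chosen sufficiently small, both $\{\Phi_m\}$ and $\{\Phi_m^{-1}\}$ converge in the compact-open topology to an automorphism $\Phi\in\Aut(X)$ by the standard Andersen--Lempert convergence argument, and condition \emph{(b)} passes to the limit by the same distance positivity: every leaf $L'$ of $\Phi_*\Fcal$ is then disjoint from $C_n$ for all but finitely many $n$ (namely, those $n\ge m_0$, where $m_0$ is chosen so that $L'$ meets $K_{m_0}$), so by the labyrinth property from Lemma \ref{lem:labyrinth2} every divergent smooth path in $L'$ has infinite $\ggot$-length, whence $\Phi_*\Fcal$ is $\ggot$-complete. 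The main obstacle in this plan is the inductive step itself: one must simultaneously push the single compactum $C_{m+1}$ into the potentially thin, disconnected complement of $A_m$ while keeping the resulting automorphism close to the identity on a large compact containing all previously placed labyrinth pieces, and this relies delicately on the interplay between the properness of $\Fcal$ (which supplies the topological room) and the density property of $X$ (which supplies the available automorphisms).
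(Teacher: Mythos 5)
Your overall strategy matches the paper's: inductively compose automorphisms that keep the transported foliation off a labyrinth $\Gamma=\bigsqcup_n C_n$ supplied by Lemma~\ref{lem:labyrinth2}, use the density property via \cite[Theorem 4.10.5]{Forstneric2017E} to realize each step, and pass to the limit exactly as in Theorem~\ref{th:proper1}. There is, however, a sign error in the inductive step that breaks the argument as written.

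You build an isotopy that moves the labyrinth piece $C_{m+1}$ off the saturated set $A_m$, fixing $\Phi_m(K_m)\cup C_1\cup\cdots\cup C_m$, and let $\Psi_{m+1}$ approximate the time-$1$ map. This yields $\Psi_{m+1}(C_{m+1})\cap A_m=\varnothing$, equivalently $C_{m+1}\cap\Psi_{m+1}^{-1}(A_m)=\varnothing$. But with the composition $\Phi_{m+1}=\Psi_{m+1}\circ\Phi_m$, the union of the leaves of $(\Phi_{m+1})_*\Fcal$ passing through $K_{m+1}$ is essentially $\Psi_{m+1}(A_m)$, so what condition (b) at step $m+1$ requires is $\Psi_{m+1}(A_m)\cap C_{m+1}=\varnothing$, and this does \emph{not} follow from the previous display: for an automorphism $\Psi$ and sets $A,C$, the conditions $\Psi(C)\cap A=\varnothing$ and $\Psi(A)\cap C=\varnothing$ are genuinely different (on $\R$, take $\Psi(x)=x+10$, $A=[0,1]$, $C=[11,12]$). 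The fix is to set $\Phi_{m+1}=\Psi_{m+1}^{-1}\circ\Phi_m$: the condition $\Psi_{m+1}^{-1}(A_m)\cap C_{m+1}=\varnothing$ is precisely what you proved, and $\Psi_{m+1}^{-1}$ is still close to the identity on the relevant compact sets, so the estimates for convergence and for preserving the avoidance of $C_1,\ldots,C_m$ go through. This inverse is not a cosmetic point: it is exactly why the paper's Lemma~\ref{lem:avoiding}, and its Stein analogue forming the second half of Lemma~\ref{lem:labyrinth2}, first construct an automorphism $\Psi$ moving the (contractible) labyrinth pieces and then output $\Theta=\Psi^{-1}$ as the automorphism to be applied to $E$. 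You invoke Lemma~\ref{lem:labyrinth2} only for the labyrinth itself and redo the avoidance step by hand, which is where the slip enters; invoking its second part directly would avoid the issue. With this correction, your argument runs as intended and is essentially the paper's proof.
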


The analogous construction applies on any pseudoconvex Runge
domain in a Stein manifold with the density property. This gives the following
result, generalizing the aforementioned results in
\cite{AlarconForstneric2020MZ,AlarconGlobevnik2017,AlarconGlobevnikLopez2019Crelle} 
for domains in $\C^n$ with $n>1$. See also Theorem \ref{th:single}. 

%
%
\begin{theorem}\label{th:proper3}
Let $X$ be a Stein manifold
of dimension $>1$
 with the density property, and let $\Omega$ be a 
pseudoconvex Runge domain in $X$ endowed with a 
Riemannian metric $\ggot$. For every proper holomorphic foliation $\Fcal_0$ 
on $X$ there is a $\ggot$-complete holomorphic foliation $\Fcal$ on $\Omega$ 
such that every leaf of $\Fcal$ is biholomorphic to a pseudoconvex Runge domain in a
leaf of $\Fcal_0$.  
\end{theorem}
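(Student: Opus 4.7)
The plan is to adapt the inductive construction used for Theorem \ref{th:proper2}, but to produce a biholomorphism in the limit that identifies a pseudoconvex Runge domain $V\subset X$ with $\Omega$; the desired foliation $\Fcal$ is then the pushforward of $\Fcal_0|_V$ under this biholomorphism.

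First I would exhaust $\Omega$ by an increasing sequence of relatively compact pseudoconvex Runge domains $\Omega_1\Subset\Omega_2\Subset\cdots$ with $\bigcup_j\Omega_j=\Omega$, and apply Lemma \ref{lem:labyrinth2} to construct in each shell $\Omega_{j+1}\setminus\overline{\Omega_j}$ a finite labyrinth $\Gamma_j$ whose components are compact, $\Oscr(X)$-convex, and holomorphically contractible, with the property that any smooth divergent path in $\Omega$ which eventually avoids $\Gamma:=\bigcup_j\Gamma_j$ has infinite $\ggot$-length. The labyrinth $\Gamma_j$ may be prescribed in advance, before any leaves of the future foliation have been constructed, and it depends only on $\ggot$ and the exhaustion.

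Next I would build inductively nested pseudoconvex Runge domains $V_1\Subset V_2\Subset\cdots$ in $X$ together with biholomorphisms $\psi_j\colon V_j\to\Omega_j$ such that for every $i<j$ no leaf of the pushforward foliation $(\psi_j)_*(\Fcal_0|_{V_j})$ on $\Omega_j$ meets any component of $\Gamma_i$ that intersects $\Omega_j$. In the inductive step I would first extend $\psi_j$ by Runge-type approximation to a biholomorphism on a slightly larger Runge domain, and then employ the following absorption procedure: since every component $C$ of $\Gamma_j$ is compact, $\Oscr(X)$-convex, and holomorphically contractible, each leaf of the current pushforward foliation meeting $C$ can be pushed off $C$ by a smooth isotopy of biholomorphic maps between suitable Stein Runge domains in $X$. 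The density property of $X$ permits this isotopy to be approximated arbitrarily closely on the relevant compact set by an isotopy of automorphisms of $X$ (see \cite[Theorem 4.10.5]{Forstneric2017E}), which after composition with $\psi_j$ yields $\psi_{j+1}$. By choosing approximation errors $\epsilon_j$ with $\sum_j\epsilon_j<\infty$, both $(\psi_j)$ and $(\psi_j^{-1})$ converge locally uniformly, so $V:=\bigcup_jV_j$ is a pseudoconvex Runge domain in $X$ and $\psi:=\lim_j\psi_j\colon V\to\Omega$ is a biholomorphism.

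Finally I would set $\Fcal:=\psi_*(\Fcal_0|_V)$. Each leaf of $\Fcal$ is the $\psi$-image of a connected component of $L_0\cap V$ for some leaf $L_0$ of $\Fcal_0$; since $L_0$ is a closed complex submanifold of $X$ and $V$ is a pseudoconvex Runge domain in $X$, this component is a pseudoconvex Runge domain in the Stein manifold $L_0$. Any smooth divergent path $\gamma$ in a leaf of $\Fcal$ is also divergent in $\Omega$, and by the construction of the $\psi_j$ its leaf eventually avoids every component of $\Gamma$, so the labyrinth property forces $\length_\ggot(\gamma)=\infty$; hence $\Fcal$ is $\ggot$-complete. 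The main technical obstacle is precisely the inductive step, where one must simultaneously displace every leaf meeting the next labyrinth layer $\Gamma_j$, preserve the disjointness already arranged for $\Gamma_1,\ldots,\Gamma_{j-1}$, and control the telescoped approximation errors so that $\psi_j$ converges to a biholomorphism $V\to\Omega$ rather than merely an open holomorphic map onto a proper subset of $\Omega$. This is exactly where the $\Oscr(X)$-convexity and holomorphic contractibility of the labyrinth components (provided by Lemma \ref{lem:labyrinth2}) combine with the Anders\'en--Lempert approximation theorem for Stein manifolds with the density property.
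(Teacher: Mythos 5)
Your overall strategy is the same as the paper's: exhaust $\Omega$ by pseudoconvex Runge domains, place labyrinths from Lemma~\ref{lem:labyrinth2} in the shells, and use the density property (via \cite[Theorem~4.10.5]{Forstneric2017E}) to inductively twist $\Fcal_0$ so that the limiting biholomorphism carries a Runge subdomain of $X$ onto $\Omega$ and the induced foliation is $\ggot$-complete. The packaging via biholomorphisms $\psi_j\colon V_j\to\Omega_j$ is equivalent to the paper's sequence of automorphisms $\Phi_i\in\Aut(X)$ restricted to $\Omega$, and your description of why each leaf of $\Fcal$ is a pseudoconvex Runge domain in a leaf of $\Fcal_0$ is correct.

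However, your stated inductive condition is unachievable. You require that \emph{no} leaf of the pushforward foliation $(\psi_j)_*\bigl(\Fcal_0|_{V_j}\bigr)$ on $\Omega_j$ meets any component of $\Gamma_i$ for $i<j$. Since $\psi_j$ is a biholomorphism onto $\Omega_j$ and a proper foliation of $X$ covers $V_j$, the pushforward foliation partitions all of $\Omega_j$; but $\Gamma_i\subset\Omega_{i+1}\setminus\overline{\Omega_i}\subset\Omega_j$, so some leaf must pass through every point of $\Gamma_i$. What the construction actually delivers (and what the proof of Theorem~\ref{th:proper1} formalizes via conditions (i$_j$)--(iii$_j$)) is weaker and is all that is needed: at stage $j$ one takes the closed set $E_j=\overline{\bigcup_{x\in K_j}\text{(current leaf through $x$)}}$ for a fixed compact $K_j$ (an image of $\overline\Omega_j$) and, using that $E_j$ has non--relatively-compact complement (here Definition~\ref{def:proper} is essential), applies the second part of Lemma~\ref{lem:labyrinth2} to move $E_j$ off $\Gamma_j$; the telescoping estimates then keep it off $\Gamma_j$ at all later stages. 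Thus only the leaves entering $K_j$ are pushed off $\Gamma_j$, while leaves confined near $\partial\Omega_j$ may still cross it. Since every leaf of the limit foliation enters some fixed $\overline\Omega_{j_0}$, it avoids $\Gamma_j$ for all $j\ge j_0$, which is exactly what your final divergence argument uses. You should replace your ``no leaf meets $\Gamma_i$'' condition by this ``leaves through $K_j$ avoid $\Gamma_j$, and this persists'' formulation; otherwise the induction cannot even start.
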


Theorems \ref{th:proper1}, \ref{th:proper2}, and \ref{th:proper3} suggest
that the main obstruction to finding complete 
holomorphic foliations with specific types of leaves on a given 
Stein Riemannian manifold with the density property lies in the topological
and the complex structure of the manifold, and not in the choice of the Riemannian metric.

%
%

\section{Proof of Theorem \ref{th:main}}\label{sec:proof1}
We begin by sketching the proof of Globevnik's main theorem in \cite{Globevnik2015AM}.
The essential ingredient are labyrinths $\Gamma$ as in the following lemma
from \cite{Globevnik2015AM}. A simpler construction 
can be found in \cite{AlarconGlobevnikLopez2019Crelle},
where the connected components of $\Gamma$ are balls in affine real hyperplanes.

%
%
\begin{lemma}\label{lem:labyrinth}
Given numbers $0<r<s$ and $M>0$, there is a compact set 
$\Gamma\subset s\B^n\setminus r\overline \B^n$ with connected complement
satisfying the following two conditions:
\begin{enumerate}[\rm (a)]
\item the compact set $r \overline \B^n \cup \Gamma$ is polynomially convex, and 
\item every piecewise smooth path $\gamma:[0,1]\to \C^n \setminus \Gamma$ 
with $\gamma(0)\in r\overline\B^n$ and $\gamma(1)\in \C^n\setminus s\B^n$ 
has Euclidean length $\ge M$.
\end{enumerate}
\end{lemma}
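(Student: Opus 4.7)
The plan is to construct $\Gamma$ as a finite union of closed round Euclidean balls lying in carefully chosen affine real hyperplanes of $\C^n$, arranged inside a sequence of thin concentric spherical shells. This follows the simpler construction from \cite{AlarconGlobevnikLopez2019Crelle}; Globevnik's original argument in \cite{Globevnik2015AM} achieves the same conclusion via level sets of a holomorphic function.

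Concretely, I would choose a large integer $N$ (to be fixed later) and partition the closed shell $\{r \le |z| \le s\}$ into $N$ equal sub-shells $A_j = \{r_{j-1} \le |z| \le r_j\}$ of thickness $\delta = (s-r)/N$. In each $A_j$ I would pick a finite $\eta$-net $\{p_{j,k}\}_k$ on the middle sphere $S_j = \{|z| = (r_{j-1}+r_j)/2\}$; at each $p_{j,k}$ let $H_{j,k} \subset \C^n$ denote the real affine hyperplane tangent to $S_j$ at $p_{j,k}$, perturbed slightly to guarantee a genericity condition (see below), and take $D_{j,k} \subset H_{j,k}$ to be the closed Euclidean ball of radius $R$ centered at $p_{j,k}$. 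Define $\Gamma = \bigcup_{j,k} D_{j,k}$. The parameters will be tuned with $\eta \ll R \ll \delta$ so that each $D_{j,k}$ fits inside $A_j$ and the disks in distinct sub-shells are disjoint, with $N$ chosen so that $N(R-\eta) \ge M$; this is easily arranged, for instance by taking $\delta,R,\eta$ comparable to $1/\sqrt{N}$.

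For the length estimate (b), the geometric point is that any piecewise-smooth path crossing a single sub-shell $A_j$ from its inner boundary to its outer boundary must at some moment pass within distance $\eta$ of one of the net points $p_{j,k}$; to then avoid the corresponding disk $D_{j,k}$, which is (nearly) perpendicular to the radial direction at $p_{j,k}$, the path must move tangentially by at least $R - \eta$. Hence the length of $\gamma$ inside $A_j$ is at least $R-\eta$, and summing over $j = 1,\ldots,N$ yields $\mathrm{length}(\gamma) \ge N(R-\eta) \ge M$. Connectedness of $\C^n \setminus \Gamma$ is essentially automatic: each $D_{j,k}$ has real codimension one in $\C^n$, and generic placements of the hyperplanes keep $\Gamma$ nowhere locally separating for $n>1$.

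For the polynomial convexity (a), I would build $r\overline{\B}^n \cup \Gamma$ inductively, adding the disks $D_{j,k}$ one at a time and invoking the classical fact that the union of a polynomially convex compact set $K$ with a disjoint closed round ball lying in a real affine hyperplane $H$ remains polynomially convex, provided $H$ is placed in sufficiently general position with respect to the complex-hyperplane directions already present in $K$. Since each genericity condition to be avoided has positive codimension in the finite-dimensional parameter space of admissible hyperplanes, the tangent hyperplanes $H_{j,k}$ can be perturbed arbitrarily slightly to meet all of them, without disturbing the net $\{p_{j,k}\}$, the radius $R$, or the length estimate. I expect this polynomial-convexity step to be the main obstacle: one has to orchestrate the inductive addition of all disks so that each new $D_{j,k}$ is in general position relative to the growing union, while simultaneously keeping the centers essentially where the geometry of (b) demands. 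The standard route is to invoke Stolzenberg-type approximation results for arcs and small totally real handles in $\C^n$, which guarantee that the perturbed configuration continues to carry the Oka property needed for polynomial convexity.
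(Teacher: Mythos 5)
The paper itself does not prove Lemma~\ref{lem:labyrinth}: it is quoted from Globevnik's construction in \cite{Globevnik2015AM}, and the authors explicitly refer to \cite{AlarconGlobevnikLopez2019Crelle} and \cite[Lemma~2.3]{Alarcon2022JDG} for the simpler version with balls in real affine hyperplanes, which is the version you are attempting to reconstruct. The route you chose is therefore the intended one, but two of its steps fail as written. The first problem is that disjointness of the pieces already breaks down. Two hyperplanes tangent to the same sphere $S_j$ of radius $\rho_j$ at points $p,p'$ a chordal distance $d=|p-p'|$ apart meet along an affine $(2n-2)$-plane whose nearest point to $p$ (equivalently to $p'$) lies at distance $\rho_j\tan(\theta/2)\approx d/2$ from each center, $\theta$ being the angular separation; hence two tangential balls of radius $R$ centered at $p,p'$ intersect as soon as $R>d/2$. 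Since in an $\eta$-net the nearest-neighbor distance is on the order of $\eta$, your requirement $\eta\ll R$ forces overlaps throughout $\Gamma$. The balls must instead be placed tangent to a strictly increasing family of spheres inside each sub-shell, with the radial spacing tuned against $R$, and reconciling that with the $\gtrsim M$ length gain per shell is precisely the nontrivial content of \cite{AlarconGlobevnikLopez2019Crelle}.

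The second problem is that the polynomial-convexity step invokes the wrong tool. A ball of positive radius in a real affine hyperplane $H\subset\C^n$ is \emph{not} totally real when $n\ge2$: the tangent space of $H$ contains a complex hyperplane of complex dimension $n-1$. Stolzenberg-type results on attaching totally real arcs or handles therefore do not apply, and ``general position'' is not the relevant notion here. What one actually uses is an elementary separation lemma: if $K$ is compact and polynomially convex, and $b$ is a closed ball in a real affine hyperplane $H$ disjoint from $K$ such that $K$ lies strictly on one side of $H$, then $K\cup b$ is polynomially convex. This dictates attaching the balls in order of strictly increasing distance from the origin, each new tangent hyperplane separating it from everything previously built, and it is again incompatible with stacking several balls tangent to a single sphere $S_j$ at mutual distance $\sim\eta\ll R$: such a configuration fails the strict-one-side condition (one checks $R<\rho_j(1-\cos\theta)\approx\eta^2/(2\rho_j)<\eta$ would be needed). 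There is also a small numerical slip: $\delta=(s-r)/N\sim 1/N$, not $1/\sqrt N$; it is $R\sim\sqrt{\delta}$ that scales as $1/\sqrt N$, which is what makes $N(R-\eta)\to\infty$ as required.
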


\begin{remark}\label{rem:labyrinth}
Given a Riemannian metric $\ggot$ on $\C^n$ and a constant $M>0$,
there is a labyrinth $\Gamma$ as in 
the lemma such that any path crossing the shell $s\B^n\setminus r\overline \B^n$ 
and avoiding $\Gamma$ has $\ggot$-length $\ge M$. The reason is that any two 
Riemannian metrics are comparable on a compact set. 
When condition (b) holds for a metric $\ggot$, we say that 
$\Gamma$ enlarges the $\ggot$-distance by $M$. 
\end{remark}

Granted the lemma, the proof in \cite{Globevnik2015AM}
proceeds as follows. The main case is to find a complete foliation by  
hypersurfaces given as level sets of a holomorphic function $f:\B^n\to\C$. 
(By Sard's theorem, most level sets of $f$ are nonsingular.)
Pick a sequence $0<r_1<r_2<\cdots <1$ with $\lim_{i\to\infty}{r_i}=1$.
In each spherical shell $S_i=\{z\in\C^n: r_i<|z|<r_{i+1}\}$ we choose a labyrinth
$\Gamma_i$ satisfying Lemma \ref{lem:labyrinth} with a constant $M_i>0$, 
chosen such that 
\begin{equation}\label{eq:Mi}
	\sum_{i=1}^\infty M_i =+\infty. 
\end{equation}
Since the compact set $K_i=r_i\overline \B^n\cup \Gamma_i$ is 
polynomially convex for every $i\in\N$, a standard construction using 
the Oka--Weil theorem gives a holomorphic function $f$ on $\B^n$ satisfying 
\begin{equation}\label{eq:minGammai}
	\lim_{i\to\infty} \min_{z\in \Gamma_i}|f(z)| =+\infty.
\end{equation}
It follows that any divergent path $\gamma:[0,1)\to \B^n$ on which 
$f$ is bounded avoids $\Gamma_i$ for all sufficiently big $i\in\N$,
and hence $\gamma$ has infinite Euclidean length by Lemma \ref{lem:labyrinth} (b)
and \eqref{eq:Mi}. Thus, every nonempty level set $\{f=c\}$ is  
a complete closed (possibly singular) complex hypersurface in $\B^n$.
Foliations of lower dimensions are obtained by adding generically 
chosen additional component functions, noting that the 
leaves are automatically complete. 

The construction in the proof of Theorem \ref{th:main} is similar to the one of Globevnik, 
except that we also use the results of Forstneri\v c \cite{Forstneric2003AM} on 
the existence of holomorphic submersions from Stein manifolds to Euclidean spaces.
His main result (see \cite[Theorem 2.5]{Forstneric2003AM}) 
is that a holomorphic submersion $f:X \to \C^q$ always exists if 
$1\le q\le \left[\frac{n+1}{2}\right]$ where $n=\dim X$,
so we obtain a nonsingular holomorphic foliation of $X$ 
of any dimension $k=n-q$ with $\left[\frac{n}{2}\right]\le k\le n-1$.
If on the other hand $\left[\frac{n+1}{2}\right]<q\le n-1$ then a holomorphic 
submersion $X^n\to\C^q$ exists if and only if the holomorphic tangent bundle 
of $X$ admits a surjective complex vector bundle map $\theta:TX\to X\times \C^q$
onto the trivial bundle of rank $q$. 
In fact, the h-principle holds: every surjective complex vector bundle 
map $\theta$ as above is homotopic through maps 
of the same kind to the tangent map of a holomorphic submersion $f:X\to\C^q$.
In \cite[Theorem 2.5]{Forstneric2003AM} it is also shown that 
the analogous results hold with interpolation on closed complex subvarieties 
and approximation on compact $\Oscr(X)$-convex subsets of $X$,
in analogy to the constructions of holomorphic functions in the Oka--Cartan 
theory and of holomorphic maps in Oka theory. 
Noncritical holomorphic functions also exist on reduced Stein spaces
with singularities, see \cite{Forstneric2016JEMS}.

%
%
\begin{proof}[Proof of Theorem \ref{th:main}]
We embed the Stein manifold $X$ as a closed complex submanifold
in a Euclidean space $\C^N$; see \cite[Theorems 2.4.1 and 9.3.1]{Forstneric2017E}
for a survey of the classical embedding theorems. 
For every $i\in\N$ the set $X_i=X\cap i \overline \B^N$ is compact and 
$\Oscr(X)$-convex, i.e., holomorphically convex in $X$. Pick a number $c_i>0$ such that 
\begin{equation}\label{eq:estimate1}
	\text{$\ggot(x,v)\ge c_i |v|$\ \ holds for every $x\in X_{i+1}$ and $v\in T_x X$.}
\end{equation} 
Here, $|v|$ denotes the Euclidean length of a tangent vector 
$v\in T_xX\subset T_x\C^N\cong \C^N$. 
In every spherical shell $S_i=\{z\in\C^N: i<|z|<i+1\}$ $(i\in\N)$ 
we choose a labyrinth $\Gamma_i$ satisfying Lemma \ref{lem:labyrinth} with a 
constant $M_i>0$, chosen such that 
\begin{equation}\label{eq:ciMi}
	\sum_{i=1}^\infty c_i M_i =+\infty. 
\end{equation} 

We explain the basic case with $k=n-1$.
Using the Oka--Weil type approximation theorem for noncritical holomorphic functions
(see \cite[Theorem 2.1]{Forstneric2003AM}) inductively, we find a holomorphic
function $f:X\to\C$ without critical points such that condition \eqref{eq:minGammai}
holds for all $i\in \N$ with $\Gamma_i$ replaced by $\Gamma'_i:=\Gamma_i\cap X$.
To be precise, choose constants $C_i>0$ $(i\in \N)$ with $\lim_{i\to\infty} C_i=+\infty$.
In the induction step, we are given a noncritical holomorphic function $f_i$
on a neighbourhood of $X_i$ in $X$. Next, we define $f_i$ on a neighbourhood 
of $\Gamma'_{i}$ in $X$ to be an arbitrary noncritical holomorphic function 
satisfying $\min_{\Gamma'_i}|f_{i}| >C_i$. Since $X_i\cup \Gamma'_i$ is a compact 
$\Oscr(X)$-convex set, we can apply \cite[Theorem 2.1]{Forstneric2003AM} 
to find a noncritical holomorphic function $f_{i+1}$ on a neighbourhood 
of $X_{i+1}$ in $X$ which approximates $f_i$ on $X_i\cup \Gamma'_i$. 
Assuming that the approximation is close enough at every step, the sequence 
$f_i$ converges uniformly on compacts in $X$ to a noncritical holomorphic function
$f:X\to \C$ satisfying $\min_{\Gamma'_i} |f| > C_i$ for every $i\in\N$.
Pick a divergent path $\gamma:[0,1)\to X$ contained in a level set of $f$ 
and choose $i_0\in\N$ such that $\gamma(0)\in i_0\B^N$. 
Note that $\gamma$ also diverges in $\C^N$ since $X$ is a closed noncompact submanifold.
We see as before that there is an integer $i_1\ge i_0$ such that $\gamma$ avoids 
the labyrinth $\Gamma'_i$, and hence also the labyrinth $\Gamma_i\subset\C^N$
for all $i\ge i_1$. From \eqref{eq:estimate1} and \eqref{eq:ciMi} it follows that 
\[
	\int_0^1 \ggot(\gamma(t),\dot\gamma (t)) dt \ \ge\ 
	\sum_{i=i_1}^\infty c_iM_i=+\infty.
\]
Hence, every nonempty level set $f^{-1}(c)$ is a closed complete complex hypersurface 
in $X$, which is smooth since $f$ has no critical points.

The same argument gives nonsingular $\ggot$-complete holomorphic foliations of $X$ 
of any dimension $k\ge \left[\frac{n}{2}\right]$. In this case, one inductively uses 
the Oka-Weil type approximation theorem for holomorphic submersions
(see \cite[Theorem 2.5]{Forstneric2003AM}) in the above proof; we leave
out the obvious details. The same holds for 
$1\le k< \left[\frac{n}{2}\right]$ if we assume the existence of a 
surjective complex vector bundle map $TX\to X\times \C^q$
with $q=n-k > \left[\frac{n+1}{2}\right]$. 
\end{proof}

By using the full power of \cite[Theorem 2.5]{Forstneric2003AM} 
we obtain the following stronger statement. 

%
%
\begin{theorem}\label{th:main2}
Let $X$ be a closed complex submanifold of dimension $n>1$
in $\C^N$ endowed with a Riemannian metric $\ggot$. 
For every integer $k$ with $N - \left[\frac{n+1}{2}\right] \le k \le N-1$ there is a 
nonsingular holomorphic foliation $\Fcal$ of dimension $k$ on $\C^N$, 
given by a holomorphic submersion $f:\C^N\to \C^{N-k}$, such that every 
leaf $\Fcal_z$ is transverse to $X$ and the induced foliation on $X$ is $\ggot$-complete. 
\end{theorem}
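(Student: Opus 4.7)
Set $q=N-k$; the hypothesis $N-\left[\frac{n+1}{2}\right]\le k\le N-1$ translates to $1\le q\le\left[\frac{n+1}{2}\right]\le\left[\frac{N+1}{2}\right]$, so \cite[Theorem~2.5]{Forstneric2003AM} is available on both $X$ and $\C^N$ with codomain $\C^q$ without any additional bundle-theoretic hypothesis. The plan is to first construct the restriction $g=f|_X$ in such a way that the foliation of $X$ by its level sets is $\ggot$-complete, and then extend $g$ to a holomorphic submersion $f\colon\C^N\to\C^q$ on the ambient space via the interpolation version of the same theorem.

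The first step is essentially the proof of Theorem~\ref{th:main} with one cosmetic change. I would place the labyrinths $\Gamma_i$ in the spherical shells of the \emph{ambient} $\C^N$ and choose the comparison constants $c_i>0$ by $\ggot(x,v)\ge c_i|v|$ for $x\in X\cap i\overline\B^N$ and $v\in T_xX$, with $\sum_i c_iM_i=+\infty$ as in \eqref{eq:ciMi}. Because $X$ is closed in $\C^N$, a divergent path in $X$ also diverges in $\C^N$ and is therefore forced to avoid all but finitely many $\Gamma_i$; the remainder of the argument runs verbatim as in Theorem~\ref{th:main}, using the Oka--Weil approximation for holomorphic submersions from \cite[Theorem~2.5]{Forstneric2003AM} applied to the Stein manifold $X$. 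The output is a holomorphic submersion $g\colon X\to\C^q$ whose level sets are closed complex submanifolds of $X$ forming a nonsingular $\ggot$-complete foliation of dimension $n-q$.

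For the second step I would invoke the interpolation version of \cite[Theorem~2.5]{Forstneric2003AM} on the Stein pair $(\C^N,X)$, seeking a holomorphic submersion $f\colon\C^N\to\C^q$ with $f|_X=g$. The formal obstruction amounts to extending the surjective holomorphic bundle map $dg\colon TX\to X\times\C^q$ to a surjective holomorphic bundle map $T\C^N\to\C^N\times\C^q$. Using the holomorphic splitting $T\C^N|_X=TX\oplus N(X)$, which exists because $X$ is a Stein submanifold of $\C^N$ (Cartan's Theorem~B), one extends $dg$ by $0$ on the normal bundle $N(X)$; regarding the resulting object as a holomorphic section of the trivial bundle $X\times\mathrm{Hom}(\C^N,\C^q)$, Cartan's Theorem~B extends it holomorphically to $\C^N$, and a small holomorphic perturbation away from $X$ arranges pointwise surjectivity everywhere. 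The h-principle of \cite[Theorem~2.5]{Forstneric2003AM} then deforms this formal solution to a genuine holomorphic submersion $f$ with $f|_X=g$.

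The resulting foliation $\Fcal=\{f^{-1}(z)\}_{z\in\C^q}$ is nonsingular of dimension $k$. At every $x\in X$ one has $df_x(T_xX)=dg_x(T_xX)=\C^q$, so $T_x\Fcal_{f(x)}+T_xX=T_x\C^N$, giving transversality; and the foliation induced on $X$ coincides with the one defined by $g$, whose leaves are $\ggot$-complete by Step~1. The delicate point is the interpolation h-principle in Step~2: once the bundle-theoretic extension of $dg$ above is in place, the machinery of \cite{Forstneric2003AM} takes care of the rest, so that the substantive new content is concentrated in Step~1.
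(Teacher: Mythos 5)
Your proposal follows the same route as the paper: build the $\ggot$-complete submersive foliation on $X$ by the argument of Theorem~\ref{th:main}, and then use the interpolation version of \cite[Theorem~2.5]{Forstneric2003AM} on the Stein pair $(\C^N,X)$ to extend to a global submersion $f\colon\C^N\to\C^q$ agreeing with the restricted map along $X$. The only difference is cosmetic: where the paper simply notes that $f_0$ extends to a holomorphic submersion on a tubular neighbourhood $U\supset X$ (so that the relevant formal data near $X$ come for free) and then quotes the theorem, you build the formal surjective bundle map by hand via the splitting $T\C^N|_X=TX\oplus N(X)$, extension by zero on $N(X)$, Cartan's Theorem~B, and a surjectivity perturbation. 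That last perturbation step is the one spot you gloss over; the existence of a continuous surjective bundle map $T\C^N\to\C^N\times\C^q$ extending the one on $X$ does hold in the range $q\le\left[\frac{N+1}{2}\right]$ because the Stiefel manifold $V_q(\C^N)$ is $2(N-q)$-connected, but it is cleaner, and is what the paper does, to extend $f_0$ itself holomorphically across $X$ and let the resulting local submersion supply the formal solution.
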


\begin{proof}
Theorem \ref{th:main} furnishes a holomorphic submersion 
$f_0:X\to \C^q$ with $\ggot$-complete leaves, where $q=N-k\le \left[\frac{n+1}{2}\right]$.
Clearly, $f_0$ extends to a holomorphic submersion $U\to \C^q$ from an open 
neighbourhood $U$ of $X$ in $\C^N$. By \cite[Theorem 2.5]{Forstneric2003AM} there 
is a holomorphic submersion $f:\C^N\to\C^q$ which agrees with $f_0$ to the 
second order along $X$. The foliation on $\C^N$ determined by $f$ then clearly satisfies 
Theorem \ref{th:main2}. Furthermore, given a Riemannian 
metric $\hgot$ on $\C^N$, one can choose $\Fcal$ to be $\hgot$-complete,
transverse to $X$, and such that its trace on $X$ is $\ggot$-complete.
We leave further details to the reader.
\end{proof}

%
%
\begin{remark}
The proof of Theorem \ref{th:main} actually gives for any  $1\le q \le \left[\frac{n+1}{2}\right]$
(where $n=\dim X$)  a holomorphic submersion $f:X\to \C^q$ which is unbounded
on every divergent path of finite $\ggot$-length in $X$. The same holds also for 
$q\in \{\left[\frac{n+1}{2}\right]+1,\ldots,n-1\}$ if there is a surjective complex vector
bundle map $TX\to X\times\C^q$. This is in the spirit of Globevnik's main theorem
in \cite{Globevnik2015AM} when $X$ is the ball in $\C^n$ and $f$ is a single 
holomorphic function, possibly with critical points. Furthermore, one can 
strengthen the statement in the spirit of those obtained by 
Charpentier and Kosi\'{n}ski \cite{CharpentierKosinski2021} for 
holomorphic functions on pseudoconvex domains in $\C^n$. In particular,
under the above conditions, a holomorphic submersion $f:X\to \C^q$ can be chosen 
such that the image of any divergent path of finite $\ggot$-length in $X$ is 
everywhere dense in $\C^q$. It follows that the foliation of $X$ defined by $f$ 
is nonsingular and $\ggot$-complete. 
\end{remark}

%
%
\section{Proofs of Theorems \ref{th:proper1}, \ref{th:proper2}, 
and \ref{th:proper3}}\label{sec:proper}

We shall need the following result generalizing \cite[Lemma 2]{AlarconForstneric2020MZ}.

%
%
\begin{lemma}\label{lem:avoiding}
Let $B$ be a compact polynomially convex set in $\C^n$ $(n>1)$,
and let $\Gamma=\bigcup_{j=1}^m \Gamma_j \subset \C^n\setminus B$ 
be a union of finitely many pairwise disjoint compact convex sets $\Gamma_j$
such that the set $B \cup \Gamma$ is polynomially convex.
If $E$ is a closed subset of $\C^n$ with unbounded complement, 
then for any $\epsilon>0$ there exists an automorphism $\Theta\in\Aut(\C^n)$ such that 
\begin{enumerate}[\rm (a)]
\item $\Theta(E)\cap \Gamma=\varnothing$, and 
\smallskip
\item $|\Theta(z)-z|<\epsilon$ for all $z\in B$.
\end{enumerate}
\end{lemma}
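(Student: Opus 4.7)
The plan is to reduce the lemma to the Anders\'en--Lempert--Forstneri\v c--Rosay approximation theorem \cite{ForstnericRosay1993}. I would construct an isotopy of injective holomorphic maps $\phi_t\colon U\to\C^n$ from a neighborhood $U$ of the polynomially convex set $B\cup\Gamma$ into $\C^n$, with $\phi_0=\mathrm{Id}$, with $\phi_t=\mathrm{Id}$ on a neighborhood of $B$ for every $t\in[0,1]$, and with $\phi_1(\Gamma_j)\subset\C^n\setminus E$ for every $j$; then approximate $\phi_1$ uniformly on a neighborhood of $B\cup\Gamma$ by an automorphism $\Psi\in\Aut(\C^n)$ and set $\Theta=\Psi^{-1}$.

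First I would choose the targets. Since $\C^n\setminus E$ is unbounded, I can pick vectors $w_1,\ldots,w_m\in\C^n$ of arbitrarily large norm and in generic directions so that the translates $\Gamma_j':=\Gamma_j+w_j$ lie in $\C^n\setminus E$, are pairwise disjoint, and are disjoint from $B$. By making $|w_j|$ large and placing the $\Gamma_j'$ in pairwise separated directions far from $B$, the union $B\cup\Gamma_1'\cup\cdots\cup\Gamma_m'$ is polynomially convex: this follows from an iterated application of Kallin's lemma, using that a compact convex set and a disjoint compact polynomially convex set lying on opposite sides of an affine real hyperplane have polynomially convex union.

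Next I would construct the isotopy by moving the $\Gamma_j$'s one at a time. Pick pairwise disjoint subintervals $I_1,\ldots,I_m\subset[0,1]$ and smooth paths $\alpha_j\colon[0,1]\to\C^n$ with $\alpha_j(0)=0$, $\alpha_j(1)=w_j$, and $\alpha_j$ constant outside $I_j$. While $\alpha_j$ is active the other pieces sit still, and the moving piece $\Gamma_j(t):=\Gamma_j+\alpha_j(t)$ is routed first into a far-away region of $\C^n$ and then slid into its final position $\Gamma_j'$, in such a way that at every $t$ it remains in a half-space disjoint from $B$ and from all the other (stationary) pieces of the current configuration. Polynomial convexity of the intermediate union $B\cup\bigcup_i\Gamma_i(t)$ is preserved at every $t\in[0,1]$ by another iterated application of Kallin's lemma. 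On a disjoint-union neighborhood $U=V_B\sqcup\bigsqcup_j V_j$ of $B\cup\Gamma$ I then define $\phi_t|_{V_B}=\mathrm{Id}$ and $\phi_t(z)=z+\alpha_j(t)$ on $V_j$; the resulting smooth isotopy of injective holomorphic maps has image $\phi_t(U)$ equal to a Runge Stein open neighborhood of the polynomially convex compactum $B\cup\bigcup_i\Gamma_i(t)$.

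Finally, by the Anders\'en--Lempert theorem (see \cite[Theorem 4.10.5]{Forstneric2017E}) I would approximate $\phi_1$ uniformly on $U$ by an automorphism $\Psi\in\Aut(\C^n)$, taken close enough to $\mathrm{Id}$ on a neighborhood of $B$ that its inverse satisfies $|\Psi^{-1}(z)-z|<\epsilon$ on $B$, and close enough to $\phi_1$ on each $\Gamma_j$ that $\Psi(\Gamma_j)\cap E=\varnothing$ (using that $E$ is closed and $\Gamma_j'\subset\C^n\setminus E$). Setting $\Theta=\Psi^{-1}$, condition (a) of the lemma is equivalent to $\Psi(\Gamma)\cap E=\varnothing$, which holds, and condition (b) follows by construction. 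The hard part of the argument is to control the polynomial convexity of the intermediate configurations $B\cup\bigcup_i\Gamma_i(t)$ throughout the isotopy; this is precisely the step where the convexity of each $\Gamma_j$ is used crucially, as it enables Kallin-type separation by affine real hyperplanes.
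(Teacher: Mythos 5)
There is a genuine gap in your very first step: you claim that, because $\C^n\setminus E$ is unbounded, you can choose translation vectors $w_j$ so that $\Gamma_j'=\Gamma_j+w_j\subset\C^n\setminus E$. This does not follow. The complement of $E$ is only assumed to be open, nonempty, and unbounded; it may be too thin to contain a translate of any given compact convex set. For instance, with $E=\{z\in\C^n:|z_1|\ge 1\}$ the set $\C^n\setminus E=\{|z_1|<1\}$ is unbounded, yet no translate of $\Gamma_j$ fits inside it as soon as $\Gamma_j$ has diameter $\ge 2$. Since your isotopy $\phi_t$ acts by pure translations $z\mapsto z+\alpha_j(t)$ on the neighborhoods $V_j$, there is no way to repair this within the scheme you propose: translations preserve diameter, so they can never squeeze $\Gamma_j$ into a thin unbounded region.

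The fix is precisely the extra step in the paper's proof that your argument omits. One first fattens $\Gamma_j$ to a compact convex neighborhood $K_j$, then applies a dilation (a biholomorphic contraction towards an interior point) to shrink $K_j$ to a tiny set, and only then translates the shrunken image into a small ball around a point of the open nonempty set $\C^n\setminus(E\cup K_0')$. Because the shrunken sets are as small as one likes, they do fit in $\C^n\setminus E$, and they can be placed pairwise disjoint and far from $B$ so that the target configuration is polynomially convex. Each composition dilation-then-translation is approximated on $K_j$ by an automorphism $\Psi_j$ via \cite[Theorem~1.1]{ForstnericRosay1993}, and then a single automorphism $\Psi$ approximating $\Psi_j$ on $K_j$ for every $j$ (with $\Psi_0=\Id$ on a neighborhood $K_0$ of $B$) is produced by \cite[Theorem~2.3]{ForstnericRosay1993}; one sets $\Theta=\Psi^{-1}$. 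The rest of your argument --- the use of Anders\'en--Lempert/Forstneri\v c--Rosay approximation, the separation of the targets by real hyperplanes to keep intermediate unions polynomially convex, and the final inversion $\Theta=\Psi^{-1}$ --- is essentially the same as in the paper. You should also note that the convexity of the pieces $\Gamma_j$ is used not only for Kallin-type separation but, more fundamentally, to allow the shrinking dilations in the first place.
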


\begin{proof}
Pick a compact neighbourhood $K_0$ of $B$ and compact convex neighbourhoods 
$K_j$ of $\Gamma_j$ for $j=1,\ldots,m$ such that the sets $K_0,\ldots,K_m$ are 
pairwise disjoint and $K=\bigcup_{j=0}^m K_j$ is polynomially convex. 
(The existence of such sets follows from standard results on polynomial
convexity; see Stout \cite{Stout2007}.)
Let $\Psi_0=\Id\in\Aut(\C^n)$ be the identity and set $K'_0=K_0$. For every $j=1,\ldots,m$ 
we choose an automorphism $\Psi_j\in \Aut(\C^n)$ such that the compact sets 
$K'_j:=\Psi_j(K_j)$ are pairwise disjoint, we have that
\begin{equation}\label{eq:disjoint1}
	K'_j \cap (E\cup K'_0) =\varnothing\quad \text{for $j=1,\ldots,m$},
\end{equation}
and the union $\bigcup_{j=0}^m K'_j$ is polynomially convex. 
Such $\Psi_j$ are obtained by squeezing $K_j$ $(j=1,\ldots,m)$ 
by a dilation into a small neighbourhood of an interior point, then  
translating the images into small pairwise disjoint balls around 
some points in $\C^n\setminus (E\cup K'_0)$ (this set is open and 
nonempty by the assumption on $E$), and applying
\cite[Theorem 1.1]{ForstnericRosay1993} to approximate the final
map by an automorphism of $\C^n$. By \cite[Theorem 2.3]{ForstnericRosay1993} 
(see also \cite[Corollary 4.12.4]{Forstneric2017E}), given $\delta>0$ 
there is an automorphism $\Psi\in\Aut(\C^n)$ satisfying 
\begin{equation}\label{eq:Psi}
	|\Psi(z)-\Psi_j(z)| < \delta\quad \text{for all $z\in K_j$, \ $j=0,1,\ldots,m$.}
\end{equation}
Let $\Theta=\Psi^{-1}$. If $\delta>0$ is small enough then condition (b)  
holds, and we have that $\Psi(\Gamma_j) \subset \mathring K'_j$ and hence 
$\Gamma_j\subset \Theta(\mathring K'_j)$ for every $j=1,\ldots,m$, 
which by \eqref{eq:disjoint1} also yields (a).  
\end{proof}

\begin{proof}[Proof of Theorem \ref{th:proper1}]
Let $\B$ denote the open unit ball of $\C^n$.
Set $r_1=1$, $B_1=r_1\overline\B=\overline\B$, $\Fcal_1=\Fcal$, and $E_1=\Fcal_1(B_1)$; 
see \eqref{eq:FK}. Choose a sequence $M_j>0$ such that 
\begin{equation}\label{eq:sumMj}
	\sum_{j=1}^\infty M_j=+\infty. 
\end{equation}
Pick a closed ball $B'_1\subset \C^n$ 
centred at $0$ and containing $B_1$ in its interior.
By Lemma \ref{lem:labyrinth} and Remark \ref{rem:labyrinth} there is 
a labyrinth $\Gamma_1\subset \mathring B'_1\setminus  B_1$ which enlarges 
the $\ggot$-distance by $M_1>0$ such that $B_1\cup \Gamma_1$
is polynomially convex. Fix a number $0<\epsilon_1<1$. 
Lemma \ref{lem:avoiding} furnishes an automorphism $\phi_1\in\Aut(\C^n)$ 
such that 
\begin{enumerate}
\item[\rm (a$_1$)] $\phi_1(E_1)\cap \Gamma_1=\varnothing$, and 
\smallskip
\item[\rm (b$_1$)] $|\phi_1(z)-z|<\epsilon_1$ for all $z\in B_1$.
\end{enumerate}
In the second step, we choose a number $r_2>2$ 
such that $\phi_1(\overline\B) \subset (r_2-1) \B$ and set 
\[
	B_2=r_2\overline \B, \quad 
	\Fcal_2=(\phi_1)_*\Fcal_1, \quad
	E_2=\Fcal_2(B_2). 
\]	
Pick a slightly bigger ball $B'_2\supset B_2$ containing $B_2$ in its interior.
By Lemma \ref{lem:labyrinth} there is a labyrinth 
$\Gamma_2 \subset \mathring B'_2\setminus B_2$ which enlarges 
the $\ggot$-distance by $M_2>0$. Given $\epsilon_2>0$, Lemma \ref{lem:avoiding} 
furnishes an automorphism $\phi_2\in\Aut(\C^n)$ such that 
\begin{enumerate}
\item[\rm (a$_2$)] $\phi_2(E_2)\cap \Gamma_2=\varnothing$, and 
\smallskip
\item[\rm (b$_2$)] $|\phi_2(z)-z|<\epsilon_2$ for all $z\in B_2$.
\end{enumerate}
Continuing inductively, we obtain an increasing sequence of numbers $r_j>0$, balls 
\begin{equation}\label{eq:balls}
	B_j=r_j\overline\B \subset B'_j \subset B_{j+1}=r_{j+1}\overline\B, 
\end{equation}
labyrinths $\Gamma_j\subset \mathring B_{j+1}\setminus B_j$, constants 
$\epsilon_j>0$, automorphisms $\phi_j\in\Aut(\C^n)$, and 
foliations $\Fcal_j$ of $\C^n$ such that, setting 
\begin{equation}\label{eq:notation}
	\text{$E_{j}=\Fcal_{j}(B_{j})$, \ \ 
	$\Phi_j=\phi_j\circ\cdots \circ\phi_1\in \Aut(\C^n)$, \ \ and\ \ 
	$\Fcal_{j+1}=(\Phi_j)_*\Fcal$}, 
\end{equation}
the following conditions hold for every for $j=1,2,\ldots$:
\begin{enumerate}[\rm (i$_j$)]
\item $\phi_j(E_j)\cap \Gamma_j=\varnothing$.
\item $|\phi_j(z)-z|<\epsilon_j$ for $z\in B_j$.
\item $0<\epsilon_{j+1} < \frac12 \min\{\epsilon_{j}, \dist(\phi_j(E_j),\Gamma_j)\}$.
(Note that the set $\phi_j(E_j)$ is closed and $\Gamma_j$ is compact, so these sets are 
at positive distance by (ii$_j$).)
\item $\Phi_j(j\overline \B) \cup B'_j \subset (r_{j+1}-1) \B$.
\item The labyrinth $\Gamma_j$ enlarges the $\ggot$-distance by $M_j$
and $B_j\cup\Gamma_j$ is polynomially convex.
\end{enumerate}
Assuming that we have obtained these quantities for indices $\le j$, the induction step goes
as follows. Let $\Fcal_{j+1}$ and $E_{j+1}$ be given by \eqref{eq:notation}.
Pick a number $\epsilon_{j+1}$ satisfying (iii$_j$) and 
a number $r_{j+1}>r_j$ satisfying (iv$_j$), and set $B_{j+1}=r_{j+1}\overline\B$. 
Choose a ball $B'_{j+1}\supsetneq B_{j+1}$. Lemma \ref{lem:labyrinth} gives a labyrinth 
$\Gamma_{j+1}\subset \mathring B'_{j+1}\setminus B_{j+1}$
satisfying (v$_{j+1}$). Then, Lemma \ref{lem:avoiding} gives an automorphism 
$\phi_{j+1}\in\Aut(\C^n)$ satisfying (i$_{j+1}$) and (ii$_{j+1}$), closing the induction.

We claim that the sequence $\Phi_j\in\Aut(\C^n)$ 
converges uniformly on compacts in $\C^n$ to an automorphism 
$\Phi\in \Aut(\C^n)$. Indeed, since 
\[
	|\phi_j(z)-z|< \epsilon_j < 1<\dist(B_j,\C^n\setminus B_{j+1})
	\ \ \text{for every $j=1,2,\ldots$}
\]
and $\sum_{j=1}^\infty \epsilon_j<\infty$
(see (ii$_j$)--(iv$_j$)), the sequence $\Phi_j$ converges uniformly on compacts in the 
domain $\Omega=\bigcup_{j=1}^\infty \Phi_j^{-1}(B_j)\subset\C^n$ to a biholomorphic 
map $\Phi:\Omega\to\C^n$ onto $\C^n$ (see \cite[Corollary 4.4.2]{Forstneric2017E}). 
Furthermore, condition (iv$_j$) ensures that $j\overline \B\subset \Omega$ for every
$j$, so $\Omega=\C^n$ and hence $\Phi\in\Aut(\C^n)$. 
Hence, the sequence of foliations $\Fcal_{j+1}=(\Phi_j)_*\Fcal$
converges uniformly on compacts in $\C^n$ to a limit foliation $\Gcal=\Phi_*(\Fcal)$.

It remains to show that the foliation $\Gcal$ is $\ggot$-complete. 
We must show that for any divergent path $\gamma:[0,1)\to \C^n$ 
contained in a leaf of $\Fcal$, the path $\tilde \gamma =\Phi\circ\gamma:[0,1)\to\C^n$ 
has infinite $\ggot$-length. 
(Since the foliation $\Fcal$ is assumed to have closed leaves of positive 
dimension, every divergent path in a leaf of $\Fcal$ is also divergent in $\C^n$. 
Note that $\tilde \gamma$ is a divergent path in $\C^n$ contained in a leaf of $\Gcal$.) 
For every $k\in\N$ let $\gamma_k=\Phi_k\circ \gamma:[0,1)\to \C^n$; this is a divergent
path contained in a leaf of the foliation $\Fcal_{k+1}=(\Phi_k)_*\Fcal$.
Pick $j\in\N$ such that $\gamma(0) \in j\B$. By (iv$_j$) we have that 
$\gamma_j(0)=\Phi_j(\gamma(0)) \in (r_{j+1}-1) \B\subset B_{j+1}$, and hence
$\gamma_j ([0,1))\subset E_{j+1}$ (see \eqref{eq:notation}). 
Conditions (i$_j$)--(iii$_j$) imply that for every $k>j$ we have that 
$\gamma_k(0)\in B_{j+1}=r_{j+1}\overline \B$ 
and the trace of $\gamma_k$ avoids the labyrinths $\Gamma_{i}$ for $i=j+1,\ldots,k$. 
Since $\gamma_k$ is a divergent path in $\C^n$, 
its $\ggot$-length is at least $\sum_{i=j+1}^k M_i$
by (i$_i$). As $k\to\infty$, it follows that $\gamma_k$ converges to a divergent path 
$\tilde \gamma =\Phi\circ\gamma:[0,1)\to\C^n$ which has infinite $\ggot$-length
in view of \eqref{eq:sumMj}.
\end{proof}

\begin{proof}[Proof of Theorem \ref{th:proper2}]
The proof will follow that of Theorem \ref{th:proper1} if we find 
labyrinths $\Gamma$ in the given Stein manifold $X$ which increase the length
of divergent paths avoiding $\Gamma$ by a given amount and whose pieces 
are holomorphically contractible in $X$. (Note that the labyrinths used 
in the proof of Theorem \ref{th:main}, which are obtained by intersecting  
labyrinths in $\C^N$ whose pieces are balls in affine real hyperplanes 
with the embedded submanifold $X\subset\C^N$,
need not satisfy this property.) To this end, we shall prove the following lemma
whose first part generalizes Lemma \ref{lem:labyrinth},
as well as \cite[Lemma 2.4]{CharpentierKosinski2020} by Charpentier and Kosi\'{n}ski, 
while the second part is an analogue of Lemma \ref{lem:avoiding} adjusted to this situation.
As before, $\B$ denotes the unit ball of $\C^N$.

%
%
\begin{lemma}\label{lem:labyrinth2}
Let $X$ be a closed complex submanifold of $\C^N$, and let $\ggot$ be a 
Riemannian metric on $X$. Given numbers $0<r<s$ and $M>0$, there is a compact set 
$\Gamma\subset X\cap (s\B\setminus r\overline \B)$ satisfying the following conditions. 
\begin{enumerate}[\rm (a)]
\item The compact set $\Gamma\cup (X\cap (r \overline \B))$ is $\Oscr(X)$-convex.  
\item $\Gamma=\bigcup_{i=1}^m \Gamma_i$ is the union of finitely many 
pairwise disjoint compact sets $\Gamma_i$ such that every $\Gamma_i$ 
is convex in a certain local holomorphic chart on $X$. 
\item Every piecewise smooth path $\gamma:[0,1]\to X \setminus \Gamma$ 
with $\gamma(0)\in X\cap r\overline\B$ and $\gamma(1)\in X\setminus s\B$ 
has $\ggot$-length at least $M$.
\end{enumerate}
Given such $\Gamma$ and assuming in addition that $X$ has the density property,
then for every closed subset $E\subsetneq X$ whose complement
$X\setminus E$ is not relatively compact in $X$ and for any $\epsilon>0$ 
there exists an automorphism $\Theta\in\Aut(X)$ such that 
\begin{enumerate}[\rm (A)]
\item $\Theta(E)\cap \Gamma=\varnothing$, and 
\smallskip
\item $|\Theta(z)-z|<\epsilon$ for all $z\in X\cap \, r\overline\B$.
\end{enumerate}
\end{lemma}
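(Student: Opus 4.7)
The plan is to implement the Globevnik--Charpentier-Kosi\'nski labyrinth construction intrinsically on the submanifold $X\subset\C^N$, arranging each piece of $\Gamma$ to be convex in a suitable local holomorphic chart on $X$, and then to build the automorphism $\Theta$ by means of the approximation theorem \cite[Theorem 4.10.5]{Forstneric2017E} for Stein manifolds with the density property, in place of the Anders\'en--Lempert theorem for $\C^n$ used in Lemma \ref{lem:avoiding}.

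For the geometric construction of $\Gamma$, I would cover a compact neighbourhood of the shell $X\cap(s\overline{\B}\setminus r\B)$ by finitely many open sets $V_1,\dots,V_J\subset X$, each Runge in $X$ and equipped with a biholomorphism $\psi_j\colon V_j\to\psi_j(V_j)\subset\C^n$ onto a bounded convex domain. Since $\ggot$ is bi-Lipschitz equivalent to the ambient Euclidean metric on any compact subset of $X$, the $\ggot$-length bound (c) reduces, after rescaling $M$, to a Euclidean length bound. Fix a Lebesgue number $\eta>0$ for this cover and partition $\C^N$ into cubes of side much smaller than $\eta$; in each cube $Q_\alpha$ meeting the shell, place small compact convex blockers in the coordinates of some chart $V_{j(\alpha)}$ containing $X\cap Q_\alpha$, arranged as mutually transverse ``walls'' following the grid scheme of Lemma \ref{lem:labyrinth}. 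With enough walls in enough coordinate directions, any piecewise smooth divergent path from $X\cap r\overline{\B}$ to $X\setminus s\B$ must accumulate Euclidean length at least $M'\gg M$, giving (c); the pieces $\Gamma_i$ of (b) are the connected components of the resulting union, convex in their charts by construction and, after shrinking the mesh, disjoint from $X\cap r\overline{\B}$.

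Property (a) then follows from three facts combined: each $\Gamma_i$ is $\Oscr(V_{j(i)})$-convex (a compact convex subset of a chart with convex image is polynomially convex in $\C^n$ and pulls back through $\psi_{j(i)}$), and the Runge property of $V_{j(i)}$ in $X$ promotes this to $\Oscr(X)$-convex; a finite pairwise disjoint union of $\Oscr(X)$-convex compacts in a Stein manifold is $\Oscr(X)$-convex; and $X\cap r\overline{\B}$ is $\Oscr(X)$-convex as the trace on $X$ of the polynomially convex set $r\overline{\B}\subset\C^N$. For the automorphism $\Theta$, I would follow the scheme of Lemma \ref{lem:avoiding}: pick a point $p_0\in X\setminus(E\cup X\cap r\overline{\B})$, which exists since $X\setminus E$ is not relatively compact, together with pairwise distinct target points $p_i\in X\setminus E$; for each $i$, using the chart-convexity of $\Gamma_i$, construct a compactly supported smooth isotopy of biholomorphisms between Stein Runge domains in $X$ that moves $\Gamma_i$ into an arbitrarily small ball around $p_i$; concatenate these local isotopies, whose supports are pairwise disjoint neighbourhoods of the $\Gamma_i$, into a global isotopy of biholomorphisms, approximate it uniformly on an $\Oscr(X)$-convex compact containing $X\cap r\overline{\B}$ by an automorphism $\Psi\in\Aut(X)$ via \cite[Theorem 4.10.5]{Forstneric2017E}, and set $\Theta:=\Psi^{-1}$. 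Condition (A) then holds because $\Psi(\Gamma_i)$ lies near $p_i\notin E$, and (B) follows from the approximation.

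The main obstacle will be the simultaneous arrangement of the chart cover $\{V_j\}$ with all $V_j$ Runge in $X$ and all $\psi_j(V_j)$ convex in $\C^n$: polynomial convexity in the ambient $\C^N$ does not transfer automatically to chart-convex subsets of $X$, and producing such a cover of the shell requires a careful use of the plurisubharmonic exhaustion of $X$, for instance by working inside a sub-level set and using the Stein structure to control the holomorphic hulls. Once this is secured, the grid-based length estimate (c) is essentially the same combinatorial argument as in Lemma \ref{lem:labyrinth}, and the density-property approximation producing $\Theta$ is a routine Anders\'en--Lempert application.
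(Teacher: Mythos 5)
Your proposal for the second half of the lemma (the automorphism $\Theta$) is essentially the paper's argument and is fine. The construction of $\Gamma$, however, has two genuine gaps, and it misses the key structural idea of the paper's proof.

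First, the length estimate~(c) is not established. You propose to cover the shell $X\cap(s\overline\B\setminus r\overline\B)$ by Runge charts $V_j$ with convex images, partition $\C^N$ into small cubes, and place Globevnik-style ``walls'' cube by cube in whichever chart contains that cube's intersection with $X$. But adjacent cubes will in general use different charts, so the walls in one cube are not aligned with those in a neighbouring cube, and a path in $X$ can slip between cubes along their common faces without accumulating length. Globevnik's Lemma \ref{lem:labyrinth} depends on a globally coherent arrangement of the walls in $\C^n$; transplanting it cell by cell into incompatible local coordinates destroys precisely the coordination that makes the combinatorial length argument work. Asserting ``with enough walls in enough coordinate directions'' does not close this gap. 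Second, the claim that \emph{a finite pairwise disjoint union of $\Oscr(X)$-convex compacts in a Stein manifold is $\Oscr(X)$-convex} is false: disjointness does not imply the union has no extra hull (two linked smooth circles in $\C^2$ are each polynomially convex, but their union is not). The paper does not use this; instead it arranges a priori that the union $\bigcup_{j=0}^k B_j$ of enclosing compacts is $\Oscr(X)$-convex, and then exploits $\Oscr(B_j)$-convexity of each local piece inside $B_j$.

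The missing idea is the paper's two-stage ``decoy'' construction, which avoids building a global intrinsic labyrinth altogether. Stage one: apply Lemma \ref{lem:labyrinth} in the ambient $\C^N$ to get a labyrinth $\Gamma^0=\bigcup_j\Gamma^0_j$ in the shell (pieces are small balls in affine real hyperplanes), with $\Gamma^0\cup r\overline\B$ polynomially convex, and such that any path in $X$ crossing the shell and avoiding $\Gamma^0$ has $\ggot$-length $\ge M$. Choose the pieces $\Gamma^0_j$ so small that each trace $\Gamma^X_j=\Gamma^0_j\cap X$ lies in a small Runge chart $U_j\subset X$ (obtained from the orthogonal projection to the tangent plane at a point of $\Gamma^X_j$). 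Stage two: around each nonempty $\Gamma^X_j$ pick a compact $\Oscr(X)$-convex neighbourhood $B_j\subset U_j$ so that $B_0,B_1,\dots,B_k$ are pairwise disjoint and $\bigcup_{j=0}^k B_j$ is $\Oscr(X)$-convex, and in each $B_j$ insert a Charpentier--Kosi\'nski labyrinth $\Gamma_j\subset\mathring B_j\setminus\Gamma^X_j$ with holomorphically contractible, chart-convex pieces that blocks passage from $bB_j$ to $\Gamma^X_j$ with $\ggot$-length $\ge M$. Then $\Gamma=\bigcup_j\Gamma_j$ satisfies~(c) by a simple dichotomy: a path avoiding every $\Gamma^X_j$ has length $\ge M$ by stage one, and a path meeting some $\Gamma^X_j$ must first cross $\Gamma_j$ inside $B_j$ and so has length $\ge M$ by stage two. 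Properties~(a) and~(b) are immediate from this construction. This reduction to the two known lemmas is what makes the proof work cleanly; the direct intrinsic construction you sketch does not obviously recover~(c), and its argument for~(a) is incorrect as stated.
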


\begin{proof}
We construct the labyrinth $\Gamma$ in two stages. 

In the first stage, we apply Lemma \ref{lem:labyrinth} to find a labyrinth 
$\Gamma^0=\bigcup_{j=1}^k \Gamma^0_j$ in the spherical shell 
$S_{r,s}=s\B\setminus  r\overline\B\subset \C^N$ which increases the
$\ggot$-length in $X$ by the given amount $M>0$ and 
the set $\Gamma^0 \cup r\overline \B$ is polynomially convex. 
Set $B_0=X\cap \, r\overline \B$. It follows that each of the compact sets 
$\Gamma^X_j=\Gamma^0_j\cap X$ ($j=1,\ldots,k$), 
$\Gamma^X= \bigcup_{j=1}^k\Gamma^X_j$, and 
$\Gamma^X \cup B_0$ are $\Oscr(X)$-convex. 
The construction of such labyrinths in \cite{AlarconGlobevnikLopez2019Crelle}
shown that the connected components $\Gamma^0_j$ of $\Gamma^0$ 
(which are closed balls in affine real hyperplanes in $\C^N$) may be chosen
with arbitrarily small diameter (this is an immediate consequence of Pythagoras theorem, 
see \cite[Lemma 2.3]{Alarcon2022JDG}); in particular we can choose them
small enough such that $\Gamma^X_j=\Gamma^0_j\cap X$ is contained in 
a holomorphic coordinate chart $U_j\subset X$ which is Runge in $X$ 
for every $j=1,\ldots,k$. (Most of the sets $\Gamma^X_j$ are empty, 
and we discard them from the above collection and adjust the indexes accordingly.) 
More precisely, for any $\Gamma^X_j\ne \varnothing$ we pick a point
$p_j\in \Gamma^X_j$ and let $\Sigma_j=T_{p_j}X \cong\C^n$ with $n=\dim X$
be the tangent plane of $X$ at $p_j$, with the orthogonal $\C$-linear projection
$\pi_j:\C^N\to \Sigma_j$. Then, we may assume that there is a Runge neighbourhood 
$U_j\subset X$ of $\Gamma^X_j$ such that the restricted projection 
\[
	\pi_j|_{U_j} : U_j \to \pi_j(U_j)=V_j \subset \Sigma_j\cong \C^n
\] 
is a biholomorphic map onto a ball $V_j\subset \Sigma_j$ around the point $p_j$. 
It follows that a compact set $K\subset U_j$ is $\Oscr(X)$-convex 
if and only if $\pi_j(K)$ is polynomially convex in $\Sigma_j\cong\C^n$. 

In the second stage, we choose for every $j=1,\ldots,k$ a compact $\Oscr(X)$-convex 
neighbourhood $B_j\subset U_j$ of $\Gamma^X_j$ such that the sets
$B_0=X\cap \, r\overline \B,B_1,\ldots, B_k$ are pairwise disjoint and 
$\bigcup_{j=0}^k B_j$ is $\Oscr(X)$-convex. 
(Note that every compact $\Oscr(X)$-set has a basis of compact $\Oscr(X)$-convex
neighbourhoods.) We now apply the construction of labyrinths with holomorphically 
contractible pieces in pseudoconvex domains in
\cite[Theorem 1.1]{CharpentierKosinski2020} by Charpentier and 
Kosi\'{n}ski, to find for each $j=1,\ldots,k$ 
a labyrinth $\Gamma_j \subset \mathring B_j\setminus \Gamma^X_j$ 
which is $\Oscr(B_j)$-convex (and hence $\Oscr(X)$-convex), its connected 
components are holomorphically contractible sets in $X$ 
(in fact, under the projection $\pi_j:U_j\to V_j\subset\Sigma_j$ 
they correspond to closed balls in affine real hyperplanes of $\Sigma_j\cong\C^n$), 
and the $\ggot$-distance in $X\setminus \Gamma_j$ from $\Gamma^X_j$ 
to $bB_j$ is at least $M$. In other words, any path in $U_j$ from $\Gamma^X_j$
to $bB_j$ which avoids $\Gamma_j$ has $\ggot$-length at least $M$.

We claim that the labyrinth $\Gamma=\bigcup_{j=1}^k \Gamma_j \subset X\cap S_{r,s}$ 
satisfies  conditions (a)--(c) in the lemma. The set $\Gamma_j$ is $\Oscr(B_j)$
convex for every $j=1,\ldots,k$. Since $\bigcup_{j=0}^k B_j$ is $\Oscr(X)$-convex,
it follows that $\Gamma\cup B_0=\Gamma\cup (X\cap r\overline \B)$ 
is $\Oscr(X)$-convex, so (a) holds. Condition (b) holds by the construction.
Let $\gamma$ be a path as in part (c) avoiding $\Gamma$. 
If $\gamma$ avoids the initial labyrinth $\Gamma^X=\Gamma^0\cap X$ then 
its $\ggot$-length is at least $M$ by the choice of $\Gamma^0$. If on the other hand
$\gamma$ intersects $\Gamma^X_j$ for some $j\in\{1,\ldots,k\}$, then 
$\gamma$ connects a point in $bB_j$ to $\Gamma^X_j$ avoiding the labyrinth
$\Gamma_j$, hence its $\ggot$-length is at least $M$ by the choice of $\Gamma_j$. 
This proves (c).

The second part of the lemma is obtained by following the proof of 
Lemma \ref{lem:avoiding}. The only point deserving an explanation is the 
construction of the automorphisms $\Psi_j$ $(j=1,\ldots,k)$ and $\Psi$ in the
proof of Lemma \ref{lem:avoiding}.  
Recall that the labyrinth $\Gamma_j=\bigcup_{i=1}^{k_j}\Gamma_{j,i}$
has holomorpically contractible $\Oscr(X)$-convex connected components $\Gamma_{j,i}$.
More precisely, there is a 1-parameter family of biholomorphic contractions 
on a pseudoconvex Runge neighbourhood of $\Gamma_{j,i}$ in $X$ 
which shrinks this set within itself almost to a point. 
We can then move the images of these small sets to $X\setminus (B_0\cup E)$ 
by an isotopy of biholomorphic maps through pseudoconvex Runge domains in $X$,
ensuring that the traces of these isotopies for $j=1,\ldots,k$ 
are pairwise disjoint, contained in $X\setminus B_0$, 
and their unions (together with $B_0)$ are Runge in $X$
for every value of the parameter. (The set $B_0$ remains fixed during this process.) 
Assuming that the Stein manifold $X$ has the density property, we can apply 
the approximation theorem for such isotopies of injective holomorphic maps
(see \cite[Theorem 4.10.5]{Forstneric2017E}) to obtain an automorphism 
$\Psi\in\Aut(X)$ as in the proof of Lemma \ref{lem:avoiding}.
The remainder of the proof is exactly as in the case $X=\C^n$. 
\end{proof}

With Lemma \ref{lem:labyrinth2} in hand, we obtain Theorem \ref{th:proper2}
by following the proof of Theorem \ref{th:proper1},
and we leave the obvious details to the reader.
\end{proof}

%
%
\begin{proof}[Proof of  Theorem \ref{th:proper3}]
Let $\Fcal_0$ be a proper holomorphic foliation on a Stein manifold $X$ 
with the density property, and let $\Omega$ be a pseudoconvex Runge 
domain in $X$. By using Lemma \ref{lem:labyrinth2} inductively, 
we find a normal exhaustion 
$\Omega_1\Subset \Omega_2\Subset\cdots\subset \bigcup_{i=1}^\infty\Omega_i=\Omega$
by open relatively compact pseudoconvex Runge domains and for each $i=1,2,\ldots$ a 
labyrinth $\Gamma_i\subset \Omega_{i+1}\setminus \overline \Omega_i$ 
having the properties (a)--(c) in Lemma \ref{lem:labyrinth2} for a given
constant $M_i>0$ chosen such that $\sum_i M_i=+\infty$. Furthermore,
we can ensure that the compact set $\Gamma_{i+1}\cup\overline \Omega_i$ 
is $\Oscr(\Omega)$-convex (and hence also $\Oscr(X)$-convex) for every $i\in\N$.
Since the components of the labyrinth $\Gamma=\bigcup_{i=1}^\infty \Gamma_i$
are holomorphically contractible, we can apply the proof of Theorem \ref{th:proper1}
to inductively twist the foliation $\Fcal_0$ by a sequence of automorphisms
$\Phi_i\in\Aut(X)$ $(i=1,2,\ldots)$, 
chosen so that they converge uniformly on compacts in $\Omega$,
and the leaves of the foliations  $\Fcal_i=(\Phi_i)_*\Fcal_{i-1}$
avoid more and more components of $\Gamma$ as $i\to\infty$. 
The limit holomorphic foliation $\Fcal=\lim_{i\to\infty}\Fcal_i$ 
on $\Omega$ is such that every leaf avoids all but finitely many labyrinths 
$\Gamma_i$, and hence it is $\ggot$-complete.
Furthermore, the construction ensures that every leaf is a pseudoconvex Runge 
domain in a leaf of the initial foliation $\Fcal_0$ on $X$. 
The details can be found (for the case $X=\C^n$)
in \cite{AlarconGlobevnikLopez2019Crelle} and \cite{AlarconForstneric2020MZ}. 
\end{proof}

%
%
In Theorems \ref{th:proper1}, \ref{th:proper2}, and \ref{th:proper3} we focused on 
constructing complete nonsingular holomorphic foliations. However, 
the proofs of these results also apply to individual 
closed complex submanifolds and yield the following result.

\begin{theorem}\label{th:single}
Let $X$ be a Stein manifold
with the density property, and let $Y$ be a closed complex submanifold of $X$. 
The following assertions hold.
\begin{enumerate}[\rm (i)]
\item Given a Riemannian metric $\ggot$ on $X$ there is a holomorphic automorphism $\Phi\in \Aut(X)$ such that the submanifold $\Phi(Y)$ is $\ggot$-complete.
\item Let $\Omega$ be a pseudoconvex Runge domain in $X$ such that 
$\Omega\cap Y\neq\varnothing$. Given a Riemannian metric $\ggot$ on $\Omega$
and a connected compact subset $K\subset \Omega\cap Y$, there is a $\ggot$-complete 
closed complex submanifold of $\Omega$ which is biholomorphic to a pseudoconvex 
Runge domain in $Y$ containing $K$.
\end{enumerate}
\end{theorem}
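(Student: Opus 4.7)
The plan is to adapt the automorphism-twisting machinery of Theorems \ref{th:proper2} and \ref{th:proper3}, with the single closed complex submanifold $Y$ in part (i) and a suitable Runge subdomain of it in part (ii) playing the role of the leaves of a proper foliation. The only interesting case is when $Y$ has positive codimension in $X$, since otherwise $Y$ is a union of connected components of $X$ and no automorphism can modify its intrinsic $\ggot$-geometry. Under this assumption, $X\setminus Y$ is not relatively compact in $X$ (a proper analytic subset cannot fill any nonempty open subset of a Stein manifold), and the same remains true for every image $\Phi(Y)$ with $\Phi\in\Aut(X)$. This is exactly the hypothesis needed to apply the second assertion of Lemma \ref{lem:labyrinth2} at each step of the induction with $E=\Phi_{j-1}(Y)$.

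For part (i), I would embed $X$ as a closed complex submanifold of some $\C^N$ and set $B_j=X\cap j\overline{\B}$. Pick constants $M_j>0$ with $\sum_j M_j=+\infty$. Inductively construct labyrinths $\Gamma_j\subset X\cap(\mathring B_{j+1}'\setminus B_j)$, where $B_j'$ is a slight enlargement of $B_j$, via Lemma \ref{lem:labyrinth2} with constant $M_j$, together with automorphisms $\phi_j\in\Aut(X)$ supplied by the second part of the same lemma so that $\phi_j(\Phi_{j-1}(Y))\cap\Gamma_j=\varnothing$ and $|\phi_j(z)-z|<\epsilon_j$ for $z\in B_j$, where $\Phi_j=\phi_j\circ\cdots\circ\phi_1$. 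Choosing the tolerances $\epsilon_j>0$ sufficiently fast-decreasing, exactly as in conditions (iii$_j$) and (iv$_j$) of the proof of Theorem \ref{th:proper1}, the sequence $\Phi_j$ converges uniformly on compacts in $X$ to an element $\Phi\in\Aut(X)$. Any divergent path in the closed submanifold $\Phi(Y)$ then eventually avoids the labyrinths $\Gamma_j$ for all large $j$, so by Lemma \ref{lem:labyrinth2}(c) its $\ggot$-length is at least $\sum_{j\ge j_0}M_j=+\infty$, yielding $\ggot$-completeness of $\Phi(Y)$.

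For part (ii), I would mimic the proof of Theorem \ref{th:proper3}. Exhaust $\Omega$ by an increasing sequence of open, relatively compact pseudoconvex Runge subdomains $\Omega_1\Subset\Omega_2\Subset\cdots$ with $K\subset\Omega_1$ and $\bigcup_i\Omega_i=\Omega$. In each shell $\Omega_{i+1}\setminus\overline{\Omega_i}$ select a labyrinth $\Gamma_i$ satisfying (a)--(c) of Lemma \ref{lem:labyrinth2} with constant $M_i$, where $\sum_iM_i=+\infty$, and such that $\Gamma_i\cup\overline{\Omega_{i-1}}$ is $\Oscr(X)$-convex. Then, inductively choose $\phi_i\in\Aut(X)$ with $\phi_i(\Phi_{i-1}(Y))\cap\Gamma_i=\varnothing$ and $\phi_i$ sufficiently close to the identity on $\overline{\Omega_{i-1}}$. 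With the tolerances chosen fast-decreasing, the compositions $\Phi_i=\phi_i\circ\cdots\circ\phi_1$ converge on a pseudoconvex Runge subdomain $\Omega'\subset X$ containing $K$ to a biholomorphism $\Phi\colon\Omega'\to\Omega$; the set $Y'=\Omega'\cap Y$ is then a pseudoconvex Runge subdomain of $Y$ containing $K$, and $Z=\Phi(Y')$ is a closed complex submanifold of $\Omega$ biholomorphic to $Y'$. Divergent paths in $Z$ have infinite $\ggot$-length by the same labyrinth argument.

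The main obstacle, which is not a new difficulty here but requires careful bookkeeping, is arranging in part (ii) that the domain of convergence $\Omega'$ is a pseudoconvex Runge subdomain of $X$ containing $K$, that $Y'=\Omega'\cap Y$ is \emph{Runge} in $Y$, and that the image $Z=\Phi(Y')$ is \emph{closed} in $\Omega$; this is exactly the subtlety addressed in Theorem \ref{th:proper3}. All of this is handled by the approximation theorem for isotopies of injective holomorphic maps between Stein Runge domains on complex manifolds with the density property (see \cite[Theorem 4.10.5]{Forstneric2017E}), precisely as in \cite{AlarconGlobevnikLopez2019Crelle} and \cite{AlarconForstneric2020MZ} in the setting $X=\C^n$. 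Once these Runge structures are propagated correctly through the induction, the $\ggot$-completeness of $\Phi(Y)$ and of $Z$ follows from the standard length estimate underlying the proofs of Theorems \ref{th:proper1} and \ref{th:proper3}.
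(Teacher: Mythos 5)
Your proposal is correct and takes essentially the same approach as the paper, which simply remarks that the proofs of Theorems \ref{th:proper1}--\ref{th:proper3} apply verbatim with the single closed submanifold $Y$ (respectively a Runge subdomain of it) playing the role of the leaves, via Lemmas \ref{lem:avoiding} and \ref{lem:labyrinth2}. Your explicit observation that positive codimension of $Y$ supplies the hypothesis that $X\setminus E$ is not relatively compact, needed to invoke the second part of Lemma \ref{lem:labyrinth2} at each induction step, is precisely the point that makes this reduction go through.
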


By combining assertion (i) in Theorem \ref{th:single} with the embedding theorems 
for Stein manifolds (see \cite{EliashbergGromov1992,Schurmann1997,
AndristForstnericRitterWold2016}) we obtain the following corollary.

\begin{corollary}
Every Stein manifold $Y$ of dimension $n\ge 1$ admits a
proper $\ggot$-complete holomorphic embedding in $(\C^N,\ggot)$
for any $N\ge \max\left\{3,\left[\frac{3n}{2}\right]+1\right\}$, and a proper 
$\ggot$-complete holomorphic embedding in $(X,\ggot)$ for any Riemannian
Stein manifold $X$ with the density property of dimension $\dim X\ge 2n+1$. 
\end{corollary}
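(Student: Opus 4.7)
The plan is to first use the classical embedding theorems to obtain \emph{some} proper holomorphic embedding, and then apply assertion (i) of Theorem \ref{th:single} to twist that embedding into a $\ggot$-complete one without destroying properness or injectivity.

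For the first statement, fix a Stein manifold $Y$ of dimension $n\ge 1$ and an integer $N\ge \max\{3,\left[\frac{3n}{2}\right]+1\}$. By the embedding theorems of Eliashberg--Gromov \cite{EliashbergGromov1992} and Sch\"urmann \cite{Schurmann1997}, there exists a proper holomorphic embedding $\iota:Y\hra\C^N$; in particular, $\iota(Y)$ is a closed complex submanifold of $\C^N$. Since $\C^N$ has the density property (Anders\'en--Lempert \cite{AndersenLempert1992}), Theorem \ref{th:single}(i) applied to the ambient manifold $\C^N$, the submanifold $\iota(Y)$, and the given Riemannian metric $\ggot$ furnishes an automorphism $\Phi\in\Aut(\C^N)$ such that the closed complex submanifold $\Phi(\iota(Y))\subset \C^N$ is $\ggot$-complete. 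The map $\Phi\circ\iota:Y\to\C^N$ is then a proper holomorphic embedding (composition of a proper holomorphic embedding with a biholomorphism of the target), and its image is $\ggot$-complete. This gives the desired embedding.

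For the second statement, let $X$ be a Stein manifold with the density property and $\dim X\ge 2n+1$, equipped with a Riemannian metric $\ggot$. By the embedding theorem of Andrist--Forstneri\v c--Ritter--Wold \cite{AndristForstnericRitterWold2016}, there exists a proper holomorphic embedding $\iota:Y\hra X$. Applying Theorem \ref{th:single}(i) to the triple $(X,\iota(Y),\ggot)$ we obtain $\Phi\in\Aut(X)$ such that $\Phi(\iota(Y))$ is a closed $\ggot$-complete complex submanifold of $X$, and $\Phi\circ\iota:Y\to X$ is the required proper $\ggot$-complete holomorphic embedding.

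The only point to verify is that applying the automorphism $\Phi$ preserves properness of the embedding and the submanifold structure of the image; this is immediate since $\Phi$ is a biholomorphism of the target, so $\ggot$-completeness of $\Phi(\iota(Y))$ as a submanifold is precisely what the Corollary asserts. No further obstacle arises, as the entire difficulty has been absorbed into Theorem \ref{th:single}(i).
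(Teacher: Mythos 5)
Your proof is correct and follows exactly the route the paper intends: embed $Y$ properly via the Eliashberg--Gromov/Sch\"urmann theorem (resp.\ the Andrist--Forstneri\v{c}--Ritter--Wold theorem), then apply Theorem \ref{th:single}(i) to twist the image by an automorphism to achieve $\ggot$-completeness. The paper gives no further detail and relies on the same combination of results.
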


%
%
%
%
\medskip
\noindent {\bf Acknowledgements.} Alarc\'on is partially supported by the State Research Agency (AEI) via the grant no.\ PID2020-117868GB-I00 and the ``Maria de Maeztu'' Excellence Unit IMAG, reference CEX2020-001105-M, funded by MCIN/AEI/10.13039/501100011033/, Spain.

Forstneri\v c is supported by the 
European Union (ERC Advanced grant HPDR, 101053085) and grants P1-0291, J1-3005, and N1-0237 from ARRS, Republic of Slovenia.

We thank an anonymous referee for useful remarks, and the editor for the suggestion to make
the introduction more accessible to a wider audience.




\end{document}